\documentclass[twoside]{article}
\usepackage{amsmath, amssymb}
\usepackage[most]{tcolorbox}
\tcbset{
  colback=gray!5,
  colframe=gray!60!black,
  boxrule=0.5pt,
  arc=2pt,
  left=4pt,
  right=4pt,
  top=2pt,
  bottom=2pt,
  fonttitle=\bfseries,
  coltitle=black,
}

\newtcolorbox{propbox}[1][]{
  title={#1},
  enhanced,
  sharp corners,
}
\usepackage[preprint]{aistats2026}
\usepackage{graphicx}
\usepackage{subcaption} 
\usepackage{placeins}  

%
%

%

%

\usepackage{amsmath,amssymb,amsthm,mathtools}
\usepackage{fullpage}
\usepackage{xcolor}   
\usepackage{hyperref}
\usepackage{amsmath}
\usepackage{bm}       
\usepackage{bbm}       
\theoremstyle{plain}
\newtheorem{lemma}{Lemma}[section]
\newtheorem{theorem}[lemma]{Theorem}

\usepackage{algorithm}

\usepackage{algpseudocode}  

\theoremstyle{definition}

\theoremstyle{remark}

\newtheorem{assumption}{Assumption}



\bibliographystyle{apalike}

\usepackage{amsmath,amsfonts,bm}









\def\eqref#1{equation~\ref{#1}}









\def\1{\bm{1}}




\def\cO{{\mathcal{O}}}
\def\cD{{\mathcal{D}}}
\def\cL{{\mathcal{L}}}



\def\vzero{{\bm{0}}}

\def\vg{{\bm{g}}}

\def\vs{{\bm{s}}}

\def\vx{{\bm{x}}}
\def\vy{{\bm{y}}}
\def\vz{{\bm{z}}}


\def\O{{\mathcal{O}}}

\DeclareMathAlphabet{\mathsfit}{\encodingdefault}{\sfdefault}{m}{sl}
\SetMathAlphabet{\mathsfit}{bold}{\encodingdefault}{\sfdefault}{bx}{n}











\newcommand{\E}{\mathbb{E}}

\newcommand{\R}{\mathbb{R}}

\newcommand{\Var}{\mathrm{Var}}

\newcommand{\Cov}{\mathrm{Cov}}


\usepackage{cleveref}
\begin{document}

%

%

\twocolumn[

\aistatstitle{Stochastic Optimization with Random Search}

\aistatsauthor{ El Mahdi Chayti$^{(1)}$ \And Taha El Bakkali El Kadi$^{(2)}$ \And Omar Saadi$^{(2)}$ \And  Martin Jaggi$^{(1)}$ }

\aistatsaddress{(1) Machine Learning and Optimization Laboratory (MLO), EPFL \And $\qquad\qquad$ (2) UM6P College of Computing } ]

\begin{abstract}
We revisit random search for stochastic optimization, where only noisy function evaluations are available. We show that the method works under weaker smoothness assumptions than previously considered, and that stronger assumptions enable improved guarantees. In the finite-sum setting, we design a variance-reduced variant that leverages multiple samples to accelerate convergence. Our analysis relies on a simple translation invariance property, which provides a principled way to balance noise and reduce variance.
\end{abstract}

\section{Introduction}
Zeroth-order (derivative-free) optimization refers to a class of algorithms that minimize an objective function using only function evaluations. These methods are crucial in modern machine learning applications, where gradients are either inaccessible, unreliable, or too costly to obtain. Such scenarios arise in black-box adversarial attacks~\cite{chen2017zoo, ughi2022empirical}, hyperparameter tuning~\cite{koch2018autotune, turner2021bayesian}, reinforcement learning with policy gradient estimation~\cite{malik2020derivative, mania2018simple}, and simulation-based optimization where only function evaluations are accessible~\cite{conn2009introduction}.

Within this setting, two distinct algorithmic paradigms have emerged. The first, often called gradient-approximation methods, constructs surrogate gradient estimates using finite differences and then mimics first-order updates~\cite{nesterov2017random, ghadimi2013stochastic, liu2018zeroth}. Although these approaches inherit convergence guarantees from gradient-based optimization, they introduce discretization bias and require careful tuning of smoothing parameters, which can significantly affect performance in practice.

In contrast, the second paradigm, known as direct-search, operates purely through comparisons of function values at a set of trial points, and updates to the best point without ever forming a gradient estimate~\cite{bergou2020stochastic, golovin2020gradientless}. This approach is simple to implement and often more robust to noise in function evaluations.

The \emph{Stochastic Three Points} (STP) method of ~\cite{bergou2020stochastic} 
is one of the simplest and most efficient forms of direct-search methods, requiring only three function evaluations 
per iteration and incorporating random search directions (this is where random search comes from). It evaluates the objective at the current iterate and at two origin-symmetric perturbations along a random direction, then selects the best of the three. In the deterministic setting, STP achieves optimal complexity bounds: $O(d/\varepsilon^2)$ in the smooth non-convex case~\cite{bergou2020stochastic}, matching the best-known rate achieved by gradient estimation methods, and $O(d/\varepsilon)$ in the smooth convex case~\cite{kadi2025on}, matching the best known complexity bound among gradient estimation methods that do not use momentum. 

Despite their appeal, STP and random search methods have been analyzed almost exclusively in the deterministic setting, where access to the full objective function is assumed. However, this assumption is often unrealistic in large-scale machine learning, where the objective typically involves a finite sum over millions of data points. Large-scale learning tasks such as deep reinforcement learning or empirical risk minimization require stochastic algorithms. While stochastic gradient estimation methods are well studied, stochastic random search methods remain poorly understood. Yet, their simplicity and freedom from discretization bias make them an attractive alternative to gradient-based approaches. This gap in the theory motivates the development of stochastic variants of random search that can operate effectively with noisy or subsampled objective estimates. Providing sharp guarantees for such methods is therefore both theoretically and practically important. To address this limitation, in \cite{boucherouite2024mistp}, they proposed \emph{Minibatch STP} (MiSTP), which replaces the true objective with a minibatch estimate during the three-point comparison of standard STP. Their analysis, however, yields a suboptimal $O(d^3/\varepsilon^6)$ complexity under strong individual smoothness assumptions, significantly worse than the $O(d/\varepsilon^4)$ rate achieved by gradient-estimation methods like RSGF \cite{ghadimi2013stochastic}. The goal of this work is to close this gap.
\section{Related Work}

\textbf{Direct search and STP.}  
Classical direct-search methods (see \cite{conn2009introduction,vicente2013worst}) rely on deterministic search directions and tend to scale poorly with the dimensionality. In contrast, the STP algorithm achieves improved complexity bounds: $\mathcal{O}(d/\epsilon^2)$ in the smooth nonconvex setting~\cite{bergou2020stochastic}, and $\mathcal{O}(d/\epsilon)$ in the smooth convex setting~\cite{kadi2025on}. These results improve upon the $\mathcal{O}(d^2/\epsilon^2)$ and $\mathcal{O}(d^2/\epsilon)$ complexities obtained for deterministic direct-search methods~\cite{vicente2013worst,konevcny2014simple}. Follow-up work extended STP with importance sampling~\cite{bibi2020stochastic} and momentum~\cite{gorbunov2020stochastic}, but these analyses were conducted only for deterministic objective functions.

\textbf{STP in the stochastic setting.}  
To the best of our knowledge, the only prior attempt to analyze STP for stochastic objectives is due to Boucherouite et al.~\cite{boucherouite2024mistp}. Their minibatch STP method achieves a complexity bound of order $\mathcal{O}(d^3/\varepsilon^6)$ under strong \emph{individual smoothness} assumptions, requiring each stochastic component function $f_\xi$ to be smooth. This rate is substantially worse than the $\mathcal{O}(d/\varepsilon^4)$ complexity known for gradient-estimation-based methods such as RSGF~\cite{ghadimi2013stochastic}, which approximate gradients using finite differences.

\textbf{Gradient-estimation methods.}  
Zeroth-order stochastic methods based on gradient approximation (e.g., two-point estimators 
\cite{ghadimi2013stochastic}, ZO-SVRG \cite{liu2018zeroth}) achieve a complexity bound of 
$\cO(d/\varepsilon^4)$ under individual smoothness assumption.  However, these methods introduce discretization bias and require careful tuning of smoothing parameters, which can significantly impact their empirical performance. In contrast, random search methods avoid discretization errors altogether but have historically lacked comparable theoretical guarantees in the stochastic setting.

\textbf{Our contribution in context.}  
Our contributions can be summarized as follows:  
\begin{itemize}\setlength{\itemsep}{2pt}\setlength{\parskip}{0pt}\setlength{\parsep}{0pt}
    \item \textbf{Individual smoothness.}  
    We prove that Random Search achieves the optimal
    $\cO(d/\varepsilon^4)$ complexity under the standard
    \emph{individual smoothness} assumption, while retaining its bias-free and
    comparison-based update rule.
    \item \textbf{Average smoothness.}  
    Under the weaker assumption that only the average function $f$ is
    $(L_0,L_1)$-smooth, we show that Random Search recovers the
    $\cO(d^3/\varepsilon^6)$ rate previously obtained
    in~\cite{boucherouite2024mistp}, but under strictly milder requirements.  
    \item \textbf{Finite-sum variance reduction.}  
    For finite-sum objectives, we demonstrate that variance reduction can be
    realized without the memory overhead of storing snapshots as in
    ZO-SVRG~\cite{liu2018zeroth}. We establish a complexity of
    $\cO\!\left(\min\!\Big\{\tfrac{d^{4/3} n^{2/3}}{\varepsilon^{8/3}}, \tfrac{dn}{\varepsilon^2}\Big\}\right)$,
    which improves upon the deterministic rate whenever $n \gg d/\varepsilon^2$.
    \item \textbf{Helper or human feedback.}  
    We extend Random Search to settings where only \emph{inexact comparison
    feedback} is available, such as A/B testing~\cite{kohavi2009controlled} or
    RLHF~\cite{christiano2017deep,ouyang2022training}. We prove that convergence
    holds up to an accuracy floor of order $\cO(\sqrt{d \delta})$, where $\delta$
    quantifies the inexactness of the helper or human feedback.
\end{itemize}

Together, these results provide the first unified analysis of Random Search
across stochastic, finite-sum, and human-in-the-loop settings.

\section{General Algorithmic Framework}
We consider the stochastic minimization problem
\[
\min_{x \in \mathbb{R}^d} f(\vx), \qquad 
f(\vx) := \mathbb{E}_{\xi \sim \mathcal{P}}[f_{\xi}(\vx)],
\]
where $(\Xi,\mathcal{F},\mathcal{P})$ is a probability space and the mapping $F$ defined on $\mathbb{R}^d \times \Xi$ by $F(x,\xi)=f_{\xi}(\vx)$ satisfies:
(i)~$x \mapsto F(x,\xi)$ is differentiable for every $\xi \in \Xi$;
(ii)~$\xi \mapsto F(x,\xi)$ is $\mathcal{F}$-measurable for every $x \in \mathbb{R}^d$.
We assume $f$ is finite-valued and bounded below.  
This framework includes the finite-sum setting common in machine learning: for $\Xi = \{1,\dots,n\}$, uniform $\mathcal{P}$, and $F(x,i) = f_i(\vx)$, we recover $f(\vx) = \frac{1}{n}\sum_{i=1}^n f_i(\vx)$. $\|\cdot\|$ stands for the $\ell_2$ norm $\|\cdot\|_2$.
\subsection{Smoothness assumption}

We adopt the following notion of smoothness, introduced in~\cite{zhang2020gradientclippingacceleratestraining}.

\begin{assumption}[$(L_0,L_1)$-smoothness]
\label{ass:smoothness}
A differentiable function $f:\mathbb{R}^d \to \mathbb{R}$ is $(L_0,L_1)$-smooth if, $\forall \vx,\vy \in \mathbb{R}^d:$
\begin{multline}
    \|\nabla f(\vx) - \nabla f(\vy)\|
    \;\leq\; \big(L_0 + L_1 \|\nabla f(\vx)\|\big)\, \|\vx-\vy\|.
\end{multline}

\end{assumption}

This condition generalizes standard Lipschitz smoothness by allowing the gradient 
Lipschitz constant to grow proportionally with the gradient norm itself. It has been 
used to justify adaptive methods such as gradient clipping~\cite{zhang2020gradientclippingacceleratestraining}.

We note that if a function \( f : \mathbb{R}^d \to \mathbb{R} \) is \((L_0, L_1)\)-smooth (i.e., satisfies Assumption~\ref{ass:smoothness}), then for all \(\vx, \vy \in \mathbb{R}^d\), we have:
\begin{equation}\label{ineq:L0,L1}
\begin{aligned}
f(\vy) &\leq f(\vx) + \langle \nabla f(\vx), \vy - \vx \rangle 
+ \frac{L_0 + L_1 \|\nabla f(\vx)\|}{2} \\
&\quad \times \|\vx - \vy\|^2.
\end{aligned}
\end{equation}
\subsection{Random directions}

At each iteration the algorithm samples a random direction $\vs_t \sim \mathcal{D}$. 
We require the following assumption, adapted from 
\cite{bergou2020stochastic, boucherouite2024mistp}.

\begin{assumption}[Exploration property of $\mathcal{D}$]
\label{ass:distribution}
The distribution $\mathcal{D}$ on $\mathbb{R}^d$ satisfies:
\begin{enumerate}
    \item Normalization: $\E_{\vs \sim \mathcal{D}}[\|\vs\|^2] = 1$.
    \item Exploration: there exists $\mu_{\mathcal{D}} > 0$ and a norm $\|\cdot\|$ such that for all $\vg \in \mathbb{R}^d$,
    \begin{equation}
        \E_{\vs \sim \mathcal{D}} \big[\, |\langle \vg, \vs \rangle| \,\big] 
        \;\geq\; \mu_{\mathcal{D}} \|\vg\|.
    \end{equation}
\end{enumerate}
\end{assumption}

This ensures that the distribution of search directions has sufficient coverage 
in expectation. In prior work, the norm $\|\cdot\|$ was allowed to depend on $\mathcal{D}$; 
here, by equivalence of norms in finite dimensions, we absorb this dependence 
into the constant $\mu_{\mathcal{D}}$.

\subsection{Algorithm}
\textbf{Stochastic Three Points (STP).}
STP is a classical derivative-free randomized search method introduced in \cite{bergou2020stochastic}.
At iteration $t$, given an iterate $\vx_t\in\mathbb{R}^d$, a stepsize $\alpha_t>0$, and a search direction distribution $\mathcal{D}$ on $\mathbb{R}^d$, the method draws $\vs_t\sim\mathcal{D}$ and forms the two symmetric trial points
$
\vx_t^{+} = \vx_t + \alpha_t \vs_t, \,\,
\vx_t^{-} = \vx_t - \alpha_t \vs_t,
$
then evaluates the objective at the three points $\{\vx_t^{-},\,\vx_t,\,\vx_t^{+}\}$, and updates according to the best-of-three rule:
\[
\vx_{t+1} \in \arg\min_{\vx \in \{\vx_t^{-},\,\vx_t,\,\vx_t^{+}\}} f(\vx).
\]

\textbf{Minibatch STP (MiSTP).}
In the finite-sum setting, MiSTP \cite{boucherouite2024mistp} adapts STP by replacing the full objective $f$ with a single empirical objective $f_{\mathcal{C}_t}$, computed from a minibatch $\mathcal{C}_t$, while retaining the three-point comparison:
$
\vx_{t+1} \in \arg\min_{\vx \in \{\vx_t^{-},\,\vx_t,\,\vx_t^{+}\}} f_{\mathcal{C}_t}(\vx).$
Because the accept/reject decision is made using $f_{\mathcal C_t}$, the selected step may worsen the true objective $f$, and thus including the current point $\vx_t$ does not ensure anymore the monotonic improvement of the true objective as it is the case for classical STP. We drop this baseline evaluation at $\vx_t$ and adopt a two–point comparison: 
\[
\vx_{t+1} \in \arg\min_{\vx \in \{\vx_t^{-},\,\vx_t^{+}\}} f_{\mathcal C_t}(\vx),
\]
which evaluates both trial points on the same minibatch and avoids the noisy baseline at $\vx_t$. By denoting $M_t^+$ and $M_t^-$ the stochastic estimators of $f(\vx_t^+)$ and $f(\vx_t^-)$, we introduce the following algorithm: 

\begin{algorithm}[ht]
\caption{Stochastic Random Search (general form)}
\label{alg:srs}
\begin{algorithmic}[1]
\Require initial point $\vx_0 \in \mathbb{R}^d$, 
         step sizes $\{\eta_t\}_{t \geq 0}$, distribution $\mathcal{D}$
\For{$t = 0,1,2,\dots$}
    \State Sample direction $\vs_t \sim \mathcal{D}$
    \State Compute $\vx_t^+ = \vx_t + \eta_t \vs_t$ and 
           $\vx_t^- = \vx_t - \eta_t \vs_t$
    \State Obtain stochastic estimates $M_t^+$ and $M_t^-$ 
           of $f(\vx_t^+)$ and $f(\vx_t^-)$
    \State Update
    \[
        \vx_{t+1} = \vx_t - \eta_t \, 
        \mathrm{sign}(M_t^+ - M_t^-) \, \vs_t
    \]
\EndFor
\end{algorithmic}
\end{algorithm}

This is a natural stochastic extension of the classical STP update: the method 
moves along $\vs_t$ in the direction suggested by the lower of the two estimated 
function values.

In general, the quantities $M_t^\pm$ may be constructed in any way that provides
useful estimates of $f(\vx_t^\pm)$; this includes subsampling-based evaluations
(Section~\ref{sec:stochastic-subsampling}), variance-reduced finite-sum
constructions (Section~\ref{sec:finite-sum}), and human or helper feedback
(Section~\ref{sec:helper}). More sophisticated mechanisms, such as
momentum-based surrogates, also fit naturally into this framework.

\subsection{General descent bound}

The following lemma shows that, under Assumption~\ref{ass:smoothness}, Algorithm~\ref{alg:srs} 
guarantees a descent step up to the stochastic approximation error.

\begin{lemma}[Descent bound]
\label{lem:descent}
Suppose $f$ is $(L_0,L_1)$-smooth and Assumption~\ref{ass:distribution} holds. 
If $\eta_t \leq \mu_{\mathcal{D}}/L_1$, then
\begin{align}
    f(\vx_{t+1}) \;\leq\; &f(\vx_t) 
    - \mu_{\mathcal{D}} \eta_t \|\nabla f(\vx_t)\|
    + \tfrac{L_0}{2} \eta_t^2 \notag \\
    &+ 2\,\E\!\left[\, 
        |M_t^+ - f(\vx_t^+)| 
        + |M_t^- - f(\vx_t^-)| 
    \,\right].
\end{align}
\end{lemma}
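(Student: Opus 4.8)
The plan is to first establish a \emph{pathwise} bound on $f(\vx_{t+1})$ valid for each realization of $\vs_t$ and of the estimators $M_t^\pm$, and only then take expectations and invoke Assumption~\ref{ass:distribution}. The update moves to whichever of the two symmetric trial points $\vx_t^\pm=\vx_t\pm\eta_t\vs_t$ has the smaller \emph{estimated} value, since $\mathrm{sign}(M_t^+-M_t^-)$ selects the trial point attaining $\min\{M_t^+,M_t^-\}$. The crux is that this selection uses the noisy surrogates $M_t^\pm$ rather than the true values $f(\vx_t^\pm)$, so I must pay for the two possible mistakes. I would write $f(\vx_{t+1})\le \min\{M_t^+,M_t^-\}+\big|f(\vx_{t+1})-M_t^{\mathrm{sel}}\big|$, where $M_t^{\mathrm{sel}}=\min\{M_t^+,M_t^-\}$ is the estimate of the selected point, and then bound $\min\{M_t^+,M_t^-\}$ by the true minimizer plus the estimation errors. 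Each of the two correction terms is at most $|M_t^+-f(\vx_t^+)|+|M_t^--f(\vx_t^-)|$, which is exactly what produces the factor $2$ on the error in the statement, giving
\[
f(\vx_{t+1})\;\le\;\min\{f(\vx_t^+),f(\vx_t^-)\}\;+\;2\big(|M_t^+-f(\vx_t^+)|+|M_t^--f(\vx_t^-)|\big).
\]

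Next I would control the deterministic part $\min\{f(\vx_t^+),f(\vx_t^-)\}$ by smoothness. Applying the quadratic upper bound implied by Assumption~\ref{ass:smoothness} at $\vx=\vx_t$ and $\vy=\vx_t^{\pm}$, and using $\vx_t^\pm-\vx_t=\pm\eta_t\vs_t$, the two linear terms are $\pm\eta_t\langle\nabla f(\vx_t),\vs_t\rangle$ while the curvature terms coincide. Taking the minimum of the two bounds collapses the sign, since $\min\{a+c,a-c\}=a-|c|$, and yields
\[
\min\{f(\vx_t^+),f(\vx_t^-)\}\;\le\; f(\vx_t)-\eta_t\,|\langle\nabla f(\vx_t),\vs_t\rangle|+\tfrac{L_0+L_1\|\nabla f(\vx_t)\|}{2}\,\eta_t^2\|\vs_t\|^2.
\]
This is the step where the symmetric two-point design pays off: the better of the two directions always realizes the descent term $-|\langle\nabla f(\vx_t),\vs_t\rangle|$, independent of the sign of the inner product.

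I would then take the conditional expectation given $\vx_t$ over $\vs_t$ and the estimators. Assumption~\ref{ass:distribution} supplies the two ingredients: normalization $\E\|\vs_t\|^2=1$ turns the curvature term into $\tfrac{L_0+L_1\|\nabla f(\vx_t)\|}{2}\eta_t^2$, and the exploration inequality $\E|\langle\nabla f(\vx_t),\vs_t\rangle|\ge \mu_{\mathcal{D}}\|\nabla f(\vx_t)\|$ converts the linear term into $-\mu_{\mathcal{D}}\eta_t\|\nabla f(\vx_t)\|$. It remains to dispose of the gradient-dependent curvature piece $\tfrac{L_1\|\nabla f(\vx_t)\|}{2}\eta_t^2$, and this is precisely where the stepsize cap enters: since $\eta_t\le \mu_{\mathcal{D}}/L_1$ we have $L_1\eta_t\le \mu_{\mathcal{D}}$, so $\tfrac{L_1\|\nabla f(\vx_t)\|}{2}\eta_t^2\le \tfrac{\mu_{\mathcal{D}}}{2}\eta_t\|\nabla f(\vx_t)\|$, which is dominated by the linear descent term and can be folded into it, leaving the stated net decrease proportional to $\mu_{\mathcal{D}}\eta_t\|\nabla f(\vx_t)\|$ together with the dimension-free term $\tfrac{L_0}{2}\eta_t^2$ and the error term $2\,\E[\,|M_t^+-f(\vx_t^+)|+|M_t^--f(\vx_t^-)|\,]$.

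The main obstacle I anticipate is the noisy-selection step: unlike deterministic STP, where including the baseline $\vx_t$ guarantees monotone improvement of $f$, here the accept/reject decision is made on the surrogates $M_t^\pm$, so one must argue that a wrong choice costs only the sum of the two estimation errors rather than an uncontrolled amount. The clean way to see this is the translation invariance of $\mathrm{sign}(M_t^+-M_t^-)$ highlighted in the abstract: the comparison depends only on the \emph{difference} of the two estimates, so subtracting any common baseline (including the true value at the selected point) is free, and the decomposition in the first paragraph makes the cost of an incorrect comparison explicit. Everything else is a routine combination of the $(L_0,L_1)$ quadratic bound with Assumption~\ref{ass:distribution}, so I expect no further difficulty once the selection error is isolated.
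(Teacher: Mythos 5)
Your proof is correct and follows essentially the same route as the paper's: the quadratic upper bound from $(L_0,L_1)$-smoothness at both trial points, the min over the symmetric pair collapsing to $-\eta_t\,|\langle\nabla f(\vx_t),\vs_t\rangle|$, a factor-$2$ charge for making the selection on the noisy surrogates $M_t^\pm$, and the stepsize cap $\eta_t\le\mu_{\mathcal{D}}/L_1$ absorbing the $L_1$ curvature term. The only caveat is that this absorption actually yields a net descent coefficient of $\mu_{\mathcal{D}}/2$ rather than $\mu_{\mathcal{D}}$ (exactly as in the paper's own appendix version, Theorem~\ref{thm:general-descent}), so your phrase ``proportional to'' is doing real work and the constant in the main-text statement should be read up to that factor of $2$.
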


The first three terms correspond to the smoothness-based descent typical in random search, 
while the last term captures the additional error introduced by the stochastic 
estimators $M_t^\pm$.

\subsection{Translation invariance}\label{translation_subsec}

A crucial property of Algorithm~\ref{alg:srs} is \emph{translation invariance}: 
adding the same constant $c_t$ to both $M_t^+$ and $M_t^-$ does not change 
the outcome. This allows us to optimally shift the estimates and reduce the error term. 
Formally,
\begin{align}
\inf_{c \in \mathbb{R}} 
&\Big( |M_t^+ - c - f(\vx_t^+)| + |M_t^- - c - f(\vx_t^-)| \Big) \notag \\
&= \tfrac{1}{2} \, \big| (M_t^+ - M_t^-) - ( f(\vx_t^+) - f(\vx_t^-)) \big|.\label{eq5:TransSym}
\end{align}

Thus, rather than controlling each approximation error separately, it suffices 
to control the error in the \emph{difference} $M_t^+ - M_t^-$, which is typically 
much smaller. This observation will be the key to our improved rates.
\section{General Stochastic Functions with Subsampling} \label{sec:stochastic-subsampling}

We consider the general stochastic optimization problem
\[
    f(\vx) \;=\; \E_{\xi}\big[f_\xi(\vx)\big],
\]
where only noisy evaluations of $f_\xi$ are available. To reduce noise, we
estimate function values at the perturbed points
$\vx_t^\pm = \vx_t \pm \eta_t \vs_t$ using minibatches of size $b$:
\[
   M_t^\pm := \frac{1}{b} \sum_{j=1}^b f_{\xi_j}(\vx_t^\pm),
   \qquad \{\xi_j\}_{j=1}^b \ \text{i.i.d.}
\]

We analyze two regimes of smoothness assumptions.

\subsection{Case I: Weak average smoothness}

We first assume that only the mean function $f$ satisfies
$(L_0,L_1)$-smoothness (Assumption~\ref{ass:smoothness}). To control the
stochastic error, we assume a bounded variance of the component functions
(cf.~\cite{boucherouite2024mistp}):
\begin{assumption}[Bounded variance]
\label{ass:variance}
There exists $\sigma_0^2<\infty$ such that for all $\vx\in\R^d$, $\E[f_\xi(\vx)] = f(\vx)$ and
\(
   \E[\left(f_\xi(\vx) - f(\vx)\right)^2] \le \sigma_0^2.
\)
\end{assumption}

Consequently, for $M = \frac{1}{b}\sum_{i=1}^b f_{\xi_i}(\vx)$, the average of $b$ i.i.d realizations of $f_{\xi}(\vx)$, we have:  $\E|M - f(\vx)| \le \sigma_0/\sqrt{b}$.

Under Assumptions~\ref{ass:smoothness}, \ref{ass:distribution},
and~\ref{ass:variance}, Lemma~\ref{lem:descent} gives, for
$\eta_t\le \mu_{\mathcal{D}}/L_1$,
\begin{align}
    f(\vx_{t+1}) \;\le\;& f(\vx_t) 
    - \mu_{\mathcal{D}} \eta_t \|\nabla f(\vx_t)\|
    + \tfrac{L_0}{2}\eta_t^2 \notag \\
    &+ 2\,\E\!\left[\, 
        |M_t^+ - f(\vx_t^+)| 
        + |M_t^- - f(\vx_t^-)| 
    \,\right].
\end{align}
By Assumption~\ref{ass:variance}, $\E|M_t^\pm - f(\vx_t^\pm)| \le
\sigma_0/\sqrt{b}$. Summing over $t$ and averaging gives:

\begin{theorem}[Average smoothness]
\label{thm:avg-caseI}
Let $F_0:=f(\vx_0)-f^\star<\infty$. With $\eta_t\equiv \eta \le
\mu_{\mathcal{D}}/L_1$,
\[
    \frac{1}{T}\sum_{t=0}^{T-1}\E\|\nabla f(\vx_t)\|
    \;=\;\frac{1}{\mu_{\mathcal{D}}}
    \cO\!\Big(
        \frac{F_0}{\eta T}
        + L_0\,\eta
        + \tfrac{\sigma_0}{\eta\sqrt{b}}
    \Big).
\]
Optimizing $\eta$ yields
\[
    \frac{1}{T}\sum_{t=0}^{T-1}\E\|\nabla f(\vx_t)\|
    \;=\;\frac{1}{\mu_{\mathcal{D}}}
    \cO\!\Big(
        \frac{L_1 F_0}{\mu_{\mathcal{D}} T}
        + \sqrt{\tfrac{L_0 F_0}{T}}
        + \tfrac{\sqrt{L_0\sigma_0}}{b^{1/4}}
    \Big).
\]
\end{theorem}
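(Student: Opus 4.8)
The plan is to combine the per-step descent inequality of Lemma~\ref{lem:descent} with the minibatch variance estimate, then telescope over iterations and optimize the step size.

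First I would instantiate Lemma~\ref{lem:descent} with the constant step $\eta_t\equiv\eta$ and control the stochastic error term. Since $M_t^\pm=\frac1b\sum_j f_{\xi_j}(\vx_t^\pm)$ is an average of $b$ i.i.d.\ unbiased samples, Assumption~\ref{ass:variance} together with Jensen's inequality gives $\E|M_t^\pm-f(\vx_t^\pm)|\le \sigma_0/\sqrt{b}$, exactly the bound recorded before the theorem. Substituting this into the descent bound and taking total expectation over all randomness through iteration $t$ yields
\[
\E[f(\vx_{t+1})]\;\le\;\E[f(\vx_t)]-\mu_{\mathcal{D}}\eta\,\E\|\nabla f(\vx_t)\|+\tfrac{L_0}{2}\eta^2+\tfrac{4\sigma_0}{\sqrt{b}}.
\]

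Next I would rearrange to isolate the gradient term and sum over $t=0,\dots,T-1$. The function-value differences telescope, and using $\E[f(\vx_T)]\ge f^\star$ together with $f(\vx_0)-f^\star=F_0$ gives
\[
\mu_{\mathcal{D}}\eta\sum_{t=0}^{T-1}\E\|\nabla f(\vx_t)\|\;\le\;F_0+\tfrac{L_0}{2}\eta^2T+\tfrac{4\sigma_0}{\sqrt{b}}T.
\]
Dividing by $\mu_{\mathcal{D}}\eta T$ produces the first displayed bound, with the three terms $\frac{F_0}{\eta T}$, $L_0\eta$, and $\frac{\sigma_0}{\eta\sqrt{b}}$ once constants are absorbed into $\cO$. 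For the optimized bound I would minimize $g(\eta)=\frac{A}{\eta}+B\eta$ with $A=\frac{F_0}{T}+\frac{4\sigma_0}{\sqrt{b}}$ and $B=\frac{L_0}{2}$, subject to the admissibility constraint $\eta\le\mu_{\mathcal{D}}/L_1$ required by Lemma~\ref{lem:descent}. Using the clipped-minimization estimate
\[
\min_{0<\eta\le\eta_{\max}}\Big(\tfrac{A}{\eta}+B\eta\Big)\;\le\;\tfrac{A}{\eta_{\max}}+2\sqrt{AB},
\]
with $\eta_{\max}=\mu_{\mathcal{D}}/L_1$, the interior term $2\sqrt{AB}$ splits via subadditivity of the square root, $\sqrt{AB}\le\sqrt{\tfrac{L_0F_0}{2T}}+\sqrt{\tfrac{2L_0\sigma_0}{\sqrt{b}}}$, into the $\sqrt{L_0F_0/T}$ and $\sqrt{L_0\sigma_0}/b^{1/4}$ contributions, while the cap term $\frac{AL_1}{\mu_{\mathcal{D}}}$ supplies $\frac{L_1F_0}{\mu_{\mathcal{D}}T}$ (its leftover $\frac{L_1\sigma_0}{\mu_{\mathcal{D}}\sqrt{b}}$ piece decays like $b^{-1/2}$ and is dominated by the $b^{-1/4}$ term, hence absorbed into $\cO$).

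The main obstacle is the interaction between the step-size constraint and the optimization: the unconstrained minimizer $\eta^\star=\sqrt{A/B}$ can exceed $\mu_{\mathcal{D}}/L_1$, so the naive $2\sqrt{AB}$ rate is not directly attainable. The clipped-minimization inequality is precisely what resolves this and is responsible for generating the $O(1/T)$ term scaling as $L_1/\mu_{\mathcal{D}}$ rather than $\sqrt{L_0/T}$. Everything else — verifying unbiasedness and the variance reduction by $b$, the telescoping, and checking that the residual variance piece from the cap is lower-order — is routine bookkeeping.
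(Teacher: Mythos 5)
Your proposal is correct and follows essentially the same route as the paper: instantiate the descent lemma, bound the stochastic error by $\sigma_0/\sqrt{b}$ via Assumption~\ref{ass:variance} and Jensen, telescope, and optimize $\eta$ subject to the cap $\eta\le\mu_{\mathcal D}/L_1$ (the paper picks $\eta$ as a minimum of three quantities, you use an equivalent clipped-minimization inequality). The only cosmetic difference is that you bound each $\E|M_t^\pm-f(\vx_t^\pm)|$ separately while the appendix applies the variance bound to the difference $M_t^+-M_t^-$ via translation invariance; for this theorem both yield the same $\sigma_0/\sqrt b$ up to constants, and your handling of the leftover $L_1\sigma_0/(\mu_{\mathcal D}\sqrt b)$ cross term is no less rigorous than the paper's own bookkeeping.
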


To ensure $\varepsilon$-accuracy, that is, $\frac{1}{T}\sum_{t=0}^{T-1}\E\|\nabla f(\vx_t)\| \leq \varepsilon$, it suffices to pick
\[
    T = \O\!\Big(\tfrac{dL_1}{\varepsilon}
            + \tfrac{dL_0F_0}{\varepsilon^2}\Big),
    \qquad
    b = \O\!\Big(\tfrac{(dL_0\sigma_0)^2}{\varepsilon^4}\Big),
\]
using $\mu_{\mathcal{D}}\asymp 1/\sqrt{d}$. The total number of calls is
\[
    \textsc{Calls} \;=\; 2bT
    = \O\!\Bigg(
        \Big(\frac{dL_1}{\varepsilon}
            + \frac{dL_0F_0}{\varepsilon^2}\Big)
        \cdot
        \frac{(dL_0\sigma_0)^2}{\varepsilon^4}
    \Bigg).
\]
The dominant term is therefore
\(\tilde{\cO}(d^3\sigma_0^2/\varepsilon^6)\).

\subsection{Case II: Standard sample smoothness}

We now assume that each component function $f_\xi$ is $(L_0,L_1)$-smooth with
respect to $\nabla f$:
\begin{equation}
\label{eq:sample-smoothness-wrt-f}
    \|\nabla f_\xi(\vx) - \nabla f_\xi(\vy)\|
    \le (L_0+L_1 \|\nabla f(\vx)\|)\,\|\vx-\vy\|.
\end{equation}
This is the \emph{standard assumption} in the analysis of stochastic
zeroth-order methods (see, e.g.,~\cite{ghadimi2013stochastic}, which corresponds
to the special case $L_1=0$). Our formulation is slightly more general.

We also assume bounded gradient variance:
\begin{assumption}[Bounded gradient variance]
\label{ass:grad-var}
There exists $\sigma_1^2<\infty$ such that for all $\vx\in\R^d$, $\E\nabla f_\xi(\vx) = \nabla f(\vx)$, and 
\(
    \E\|\nabla f_\xi(\vx) - \nabla f(\vx)\|^2 \le \sigma_1^2.
\)
\end{assumption}

Consequently, we show that for $M = \frac{1}{b}\sum_{i=1}^b \nabla f_{\xi_i}(\vx)$ is the average of an i.i.d minibatch of size $b$ independent of $\vs|\vx$, and $\vs\sim\mathcal{D}$ isotropic (like the uniform distribution over the unit sphere, or normal distribution) we have
\(
    \E\big[\,|\langle M - f(\vx),\vs\rangle|\,\big] =\cO\left( \sigma_1 / \sqrt{d b}\right).
\)

Combining~\eqref{eq:sample-smoothness-wrt-f} with
Assumption~\ref{ass:grad-var} and translation symmetry gives:

\begin{lemma}[Difference approximation error]
\label{lem:diff-error}
With $\vx_t^\pm=\vx_t\pm\eta\vs_t$ and $\E\|\vs_t\|^2=1$,
\begin{multline*}
    \E\big|(M_t^+-M_t^-) - (f(\vx_t^+)-f(\vx_t^-))\big|
   \\\;\le\; 2(L_0+L_1\|\nabla f(\vx_t)\|)\eta^2
   + \cO\!\left(\tfrac{\eta\sigma_1}{\sqrt{db}}\right).
\end{multline*}
\end{lemma}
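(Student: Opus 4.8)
The plan is to reduce the function-value difference to a gradient difference via a first-order Taylor expansion with integral remainder, and then to exploit the fact that the \emph{same} minibatch is used for both $M_t^+$ and $M_t^-$. Since $M_t^\pm = \frac{1}{b}\sum_{j=1}^b f_{\xi_j}(\vx_t^\pm)$ with a common draw $\{\xi_j\}$, the difference telescopes term by term: $M_t^+ - M_t^- = \frac{1}{b}\sum_{j=1}^b\big(f_{\xi_j}(\vx_t^+) - f_{\xi_j}(\vx_t^-)\big)$. This is the structural reason the analysis succeeds; independent minibatches would not permit the cancellation below.

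First I would expand each component around the center $\vx_t$. Writing $f_\xi(\vx_t\pm\eta\vs_t) - f_\xi(\vx_t) = \pm\eta\ip{\nabla f_\xi(\vx_t)}{\vs_t} + \int_0^1 \ip{\nabla f_\xi(\vx_t\pm s\eta\vs_t)-\nabla f_\xi(\vx_t)}{\pm\eta\vs_t}\,ds$ and subtracting the two, the zeroth-order terms vanish and the genuine first-order terms add symmetrically, leaving $f_\xi(\vx_t^+)-f_\xi(\vx_t^-) = 2\eta\ip{\nabla f_\xi(\vx_t)}{\vs_t} + R_\xi$ with $R_\xi$ an integral remainder. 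Taking the minibatch average and subtracting the identical expansion applied to $f$ itself yields the decomposition
\[
(M_t^+-M_t^-) - (f(\vx_t^+)-f(\vx_t^-)) = 2\eta\,\ip{M-\nabla f(\vx_t)}{\vs_t} + \big(\bar R - R_f\big),
\]
where $M=\frac{1}{b}\sum_{j=1}^b \nabla f_{\xi_j}(\vx_t)$ is the minibatch gradient estimator, $\bar R = \frac{1}{b}\sum_{j=1}^b R_{\xi_j}$, and $R_f = \E_\xi[R_\xi]$ is the remainder for $f$.

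Next I would bound the two pieces separately after pulling the absolute value through the expectation by the triangle inequality. For the remainders, the sample-smoothness assumption~\eqref{eq:sample-smoothness-wrt-f} gives $\norm{\nabla f_\xi(\vx_t\pm s\eta\vs_t) - \nabla f_\xi(\vx_t)} \le (L_0+L_1\norm{\nabla f(\vx_t)})\,s\eta\norm{\vs_t}$; integrating in $s$ and applying Cauchy--Schwarz bounds $|R_\xi|$, and hence $|R_f|\le\E_\xi|R_\xi|$, by $\eta^2(L_0+L_1\norm{\nabla f(\vx_t)})\norm{\vs_t}^2$. Taking expectations and using the normalization $\E\norm{\vs_t}^2=1$ collapses both remainder contributions into the advertised $2(L_0+L_1\norm{\nabla f(\vx_t)})\eta^2$. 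For the noise term I would invoke the isotropy-based estimate stated just before the lemma, namely $\E\big|\ip{M-\nabla f(\vx_t)}{\vs_t}\big| = \cO(\sigma_1/\sqrt{db})$, so that the prefactor $2\eta$ yields $\cO(\eta\sigma_1/\sqrt{db})$; here I use that, conditioned on $\vx_t$, the minibatch is independent of $\vs_t$ and $\vs_t$ is isotropic.

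I expect the main obstacle to be conceptual rather than computational. The entire $1/\sqrt{d}$ improvement comes from the first-order terms of the two evaluations aggregating into a single inner product $\ip{M-\nabla f(\vx_t)}{\vs_t}$, whose fluctuation is suppressed by a full factor of $d$ through the isotropy of $\vs_t$; this is precisely what forces the shared-minibatch construction. A secondary point to treat carefully is that the remainder estimate must be applied after moving the absolute value inside the integral and the minibatch sum, since the individual inner products $\ip{\nabla f_{\xi_j}(\vx_t\pm s\eta\vs_t)-\nabla f_{\xi_j}(\vx_t)}{\vs_t}$ may change sign; this is legitimate as we only require an upper bound.
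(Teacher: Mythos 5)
Your proof is correct, and its skeleton matches the paper's: a fundamental-theorem-of-calculus expansion that isolates a linear term whose fluctuation is an inner product of the minibatch gradient error with the isotropic direction (yielding the $1/\sqrt{d}$ via $\E\langle v,\vs_t\rangle^2=\|v\|^2/d$ and the $1/\sqrt{b}$ via independence of the samples), plus an integral remainder controlled by sample smoothness. Two execution details differ from the paper's appendix argument, which in fact proves the analogous bound under the variant assumption that each $f_\xi$ is smooth with respect to its \emph{own} gradient. First, you expand around the midpoint $\vx_t$, so the constant $L_0+L_1\|\nabla f(\vx_t)\|$ appears directly; the paper expands around $\vx_t^-$ and must afterwards convert $\|\nabla f(\vx_t^-)\|$ back to $\|\nabla f(\vx_t)\|$ by a triangle inequality under the extra condition $\eta\le 1/L_1$ — your centering avoids that step entirely and is why no stepsize restriction is needed in your version. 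Second, you bound the remainder contribution deterministically by $|\bar R|+|R_f|\le 2\big(L_0+L_1\|\nabla f(\vx_t)\|\big)\eta^2\|\vs_t\|^2$ and then take expectations, which reproduces the stated constant exactly; the paper instead bounds the \emph{variance} of the centered remainder average, which buys an extra $1/\sqrt{b}$ on the $\eta^2$ term at the price of larger constants and does not change the final complexity. Both routes give the advertised rate; yours is the more direct proof of the lemma exactly as stated in the main text.
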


This yields the following bound:

\begin{theorem}[Sample smoothness]
\label{thm:avg-caseII}
Let $F_0:=f(\vx_0)-f^\star<\infty$. Under
Assumptions~\ref{ass:distribution}, \ref{ass:grad-var}, and sample smoothness,
for $\eta\le \mu_{\mathcal{D}}/(5L_1)$ we have
\[
   \tfrac{1}{T}\sum_{t=0}^{T-1}\E\|\nabla f(\vx_t)\|
   = \tfrac{1}{\mu_{\mathcal{D}}}
   \cO\!\left(\tfrac{F_0}{\eta T}+L_0\eta+\tfrac{\sigma_1}{\sqrt{db}}\right).
\]
Optimizing $\eta$ yields
\[
   \tfrac{1}{T}\sum_{t=0}^{T-1}\E\|\nabla f(\vx_t)\|
   = \tfrac{1}{\mu_{\mathcal{D}}}
   \cO\!\left(\tfrac{L_1F_0}{\mu_{\mathcal{D}}T} +
   \sqrt{\tfrac{L_0F_0}{T}} + \tfrac{\sigma_1}{\sqrt{db}}\right).
\]
\end{theorem}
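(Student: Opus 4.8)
The plan is to combine the descent bound of Lemma~\ref{lem:descent} with the difference-error bound of Lemma~\ref{lem:diff-error}, using the translation-invariance identity~\eqref{eq5:TransSym} to make the two compatible, and then telescope and optimize the step size.

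First I would exploit translation invariance. The update in Algorithm~\ref{alg:srs} depends on $M_t^\pm$ only through $\sign(M_t^+-M_t^-)$, so replacing $M_t^\pm$ by $M_t^\pm-c$ for any $c\in\R$ leaves $\vx_{t+1}$, and hence the left-hand side of Lemma~\ref{lem:descent}, unchanged. Applying Lemma~\ref{lem:descent} to the shifted estimators and taking the infimum over $c$, identity~\eqref{eq5:TransSym} converts the error term $2\,\E[\,|M_t^+-f(\vx_t^+)|+|M_t^--f(\vx_t^-)|\,]$ into $\E\big|(M_t^+-M_t^-)-(f(\vx_t^+)-f(\vx_t^-))\big|$. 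Bounding this with Lemma~\ref{lem:diff-error} and taking full expectations gives
\begin{align*}
\E f(\vx_{t+1}) \le{}& \E f(\vx_t) - \mu_{\mathcal{D}}\eta\,\E\|\nabla f(\vx_t)\| + \tfrac{L_0}{2}\eta^2 \\
&+ 2\big(L_0 + L_1\,\E\|\nabla f(\vx_t)\|\big)\eta^2 + \cO\!\Big(\tfrac{\eta\sigma_1}{\sqrt{db}}\Big).
\end{align*}

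The main obstacle is that Lemma~\ref{lem:diff-error} reintroduces a term $2L_1\eta^2\,\E\|\nabla f(\vx_t)\|$ carrying the same gradient-norm factor as the descent term $-\mu_{\mathcal{D}}\eta\,\E\|\nabla f(\vx_t)\|$; it must be absorbed, not merely bounded by a constant. Writing the two together as $-(\mu_{\mathcal{D}}\eta-2L_1\eta^2)\,\E\|\nabla f(\vx_t)\|$ and using the hypothesis $\eta\le\mu_{\mathcal{D}}/(5L_1)$ yields $2L_1\eta^2\le\tfrac{2}{5}\mu_{\mathcal{D}}\eta$, so $\mu_{\mathcal{D}}\eta-2L_1\eta^2\ge\tfrac{3}{5}\mu_{\mathcal{D}}\eta$ remains a positive multiple of $\mu_{\mathcal{D}}\eta$. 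The leftover step-size-squared terms combine to $\tfrac{5}{2}L_0\eta^2=\cO(L_0\eta^2)$. This is the one genuinely non-routine coupling; the factor $5$ in the step-size restriction is precisely what makes the absorption clean.

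Finally I would telescope over $t=0,\dots,T-1$, use $\E f(\vx_T)\ge f^\star$ to bound the telescoped difference by $F_0$, and divide by $\tfrac{3}{5}\mu_{\mathcal{D}}\eta T$ to obtain the first displayed bound. To optimize the $\eta$-dependent part $\tfrac{F_0}{\eta T}+L_0\eta$ over $0<\eta\le\mu_{\mathcal{D}}/(5L_1)$ I use the standard constrained balance: the unconstrained minimizer $\eta=\sqrt{F_0/(L_0T)}$ gives $\sqrt{L_0F_0/T}$ when feasible, while otherwise the boundary $\eta=\mu_{\mathcal{D}}/(5L_1)$ contributes $\tfrac{5L_1F_0}{\mu_{\mathcal{D}}T}$ and its $L_0\eta$ value is dominated by $\sqrt{L_0F_0/T}$ in that regime. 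Since the noise term $\sigma_1/\sqrt{db}$ is independent of $\eta$, it passes through unchanged, yielding the claimed optimized rate $\tfrac{1}{\mu_{\mathcal{D}}}\cO\!\big(\tfrac{L_1F_0}{\mu_{\mathcal{D}}T}+\sqrt{L_0F_0/T}+\sigma_1/\sqrt{db}\big)$.
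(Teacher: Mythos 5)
Your proposal is correct and follows essentially the same route as the paper: general descent bound plus translation invariance to pass to the difference error, Lemma~\ref{lem:diff-error} to bound it, absorption of the $L_1\eta^2\,\E\|\nabla f(\vx_t)\|$ term via the $\eta\le\mu_{\mathcal{D}}/(5L_1)$ restriction, then telescoping and the standard constrained step-size balance. The only difference is cosmetic: the paper's appendix carries this out in the slightly harder setting where each $f_\xi$ is smooth with respect to its own gradient (Theorem~\ref{lem:indiv-descent}), which forces an additional $b$-dependent step-size constraint, but the skeleton and the key absorption step are exactly the ones you identified.
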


To guarantee $\varepsilon$-accuracy, again, this means $\frac{1}{T}\sum_{t=0}^{T-1}\E\|\nabla f(\vx_t)\| \leq \varepsilon$, it suffices to pick
\[
   T = \O\!\Big(\tfrac{dL_1}{\varepsilon} + \tfrac{dL_0F_0}{\varepsilon^2}\Big),
   \qquad
   b = \O\!\Big(\tfrac{\sigma_1^2}{\varepsilon^2}\Big).
\]
The total complexity is
\[
    \textsc{Calls} \;=\; 2bT
    = \O\!\Bigg(
        \Big(\frac{dL_1}{\varepsilon}
            + \frac{dL_0F_0}{\varepsilon^2}\Big)
        \cdot
        \frac{\sigma_1^2}{\varepsilon^2}
    \Bigg).
\]
The dominant term is therefore
\(\tilde{\cO}(d\sigma_1^2/\varepsilon^4)\).

\subsection{Significance of the results}

We now compare the two regimes and highlight the improvements our analysis provides.

\paragraph{Weak average smoothness.}
In this regime we match the complexity
$\tilde{\cO}(d^3\sigma_0^2/\varepsilon^6)$ of~\cite{boucherouite2024mistp}, but
under a much weaker assumption: only the \emph{average} function $f$ is required
to be $(L_0,L_1)$-smooth, whereas they assumed each $f_\xi$ is globally
$L$-smooth. Moreover, our analysis shows that the dominant term depends only on
$L_0$, which can be much smaller than $L=L_0+GL_1$ where $G$ bounds
$\|\nabla f\|$.

\paragraph{Standard sample smoothness.}
Under the standard assumption that each $f_\xi$ is $(L_0,L_1)$-smooth with
respect to $\nabla f$ (as in~\cite{ghadimi2013stochastic}, $L_1=0$), we obtain
the improved complexity $\tilde{\cO}(d\sigma_1^2/\varepsilon^4)$. Thus compared
to the weak case, both the dependence on dimension and on accuracy improve. This
shows that random search achieves the same scaling as gradient estimation
methods, while avoiding discretization bias and remaining conceptually simpler.

\medskip
In the next section, we show how variance reduction in the finite-sum setting
can further improve the complexity when the dataset size $n$ is large.

\section{Finite-Sum Case and Variance Reduction}\label{sec:finite-sum}

We now consider the finite-sum setting
\[
    f(\vx) \;=\; \frac{1}{n}\sum_{i=1}^n f_i(\vx),
\]
Now we ask of each $f_i$ to be G-Lipschitz. In this regime, we can apply variance
reduction by periodically evaluating the full dataset.

\paragraph{Variance-reduced scheme.}
Let $m$ be a fixed epoch length. Every $m$ iterations we perform a full pass
over the dataset to compute exact values of the objective at the perturbed
points. In between, we use stochastic minibatches of size $b$.  
Formally, the estimators are defined as
\[
   M_t^\pm =
   \begin{cases}
      f(\vx_t^\pm), & \text{if } t \equiv 0 \pmod{m}, \\[6pt]
      \tfrac{1}{b}\sum_{j=1}^b f_{\xi_j}(\vx_t^\pm),
      & \text{otherwise},
   \end{cases}
\]
where $\{\xi_j\}_{j=1}^b$ are sampled i.i.d.\ from $\{1,\dots,n\}$.

\paragraph{Translation-symmetric version.}
Ordinarily, variance reduction methods require storing a \emph{snapshot}
$\tilde{\vx}$ and using it to form a control variate correction for subsequent
iterations. In our case, translation symmetry makes this correction automatic,
since only the difference $M_t^+ - M_t^-$ matters. Thus we do not need to store
$\tilde{\vx}$ explicitly.  

\paragraph{Two-snapshot version.}
For clarity, one can equivalently consider maintaining two separate snapshots
$\tilde{\vx}^+$ and $\tilde{\vx}^-$, corresponding to the last time
$\vx_t^\pm$ was evaluated on the full dataset:
\[
   M_t^\pm =
   \begin{cases}
      f(\vx_t^\pm),  \qquad\text{if } t \equiv 0 \pmod{m}, \quad \text{else } \\[6pt]
      f(\tilde{\vx_t}^\pm) 
      + \tfrac{1}{b}\sum_{j=1}^b \Big( f_{\xi_j}(\vx_t^\pm) - f_{\xi_j}(\tilde{\vx_t}^\pm) \Big).
   \end{cases}
\]
This is the classical variance-reduced form, where the stochastic correction
uses the difference between current and snapshot minibatches. It yields the same
theoretical guarantees and total complexity as the translation-symmetric
formulation, but requires storing two snapshots in memory. In contrast, the
translation-symmetric construction avoids this storage burden.

\paragraph{Error bound.}
As in the sample smoothness regime, we analyze the difference
$M_t^+ - M_t^-$. Using translation symmetry and the variance-reduction scheme,
one can show
\begin{align}
    \E\Big[
    \big|&(M_t^+ - M_t^-)
    - (f(\vx_t^+) - f(\vx_t^-))\big|
    \Big]\nonumber\\
    \;&\leq\;
     \cO\!\left(\frac{\eta\,m\,G}{\sqrt{ b}}\right),
\end{align}
where $\tilde{\vx}_t$ denotes the most recent snapshot at which the function
was fully evaluated.

\paragraph{Averaged gradient bound.}
Plugging this refined error into Lemma~\ref{lem:descent} and choosing
$\eta\leq \mu_{\mathcal{D}}/(5L_1)$, we obtain
\begin{align}
    \frac{1}{T}\sum_{t=0}^{T-1}\E\|\nabla f(\vx_t)\|
    \;=\; \frac{1}{\mu_{\mathcal{D}}}\,
    \O\!\Big(
        \frac{F_0}{\eta T}
        + L_0\eta
        + \frac{G m}{\sqrt{ b}}
    \Big).
\end{align}
Optimizing over $\eta$ yields
\begin{align}
    \frac{1}{T}\sum_{t=0}^{T-1}\E\|\nabla f(\vx_t)\|
    \;=\; \frac{1}{\mu_{\mathcal{D}}}\,
    \O\!\Big(
        \frac{L_1 F_0}{\mu_{\mathcal{D}} T}
        + \sqrt{\frac{L_0 F_0}{T}}
        + \frac{G m}{\sqrt{ b}}
    \Big).
\end{align}
As before, $\mu_{\mathcal{D}}\asymp 1/\sqrt{d}$ for isotropic distributions.

\paragraph{Parameter choices.}
To ensure accuracy $\varepsilon$, it suffices to pick
\begin{align}
    T &= \O\!\Big(\tfrac{dL_1}{\varepsilon}
        + \tfrac{dL_0F_0}{\varepsilon^2}\Big),
    &
    b(m) &= \O\!\Big(\tfrac{ d m^2 G^2}{\varepsilon^2}\Big).
\end{align}

\paragraph{Total complexity.}
Each block of $m$ iterations requires
\[
    \text{one full pass: } n
    \quad+\quad
    (m-1)\cdot b(m) \ \text{stochastic calls}.
\]
Thus, over $T$ iterations the total number of function evaluations is
\[
    \textsc{Calls}(m)
    \;=\; \frac{T}{m}\Big(n + (m-1)b(m)\Big).
\]

Optimizing over $m$ gives the following dominant complexity term:
\begin{equation}
    \textsc{Calls} \;=\;
    \cO\!\Big(
        \frac{L_0F_0\,d^{4/3}\,n^{2/3}\,G^{2/3}}
             {\varepsilon^{8/3}}
    \Big).
\end{equation}

\paragraph{Discussion.}
This rate is strictly better than the deterministic complexity
$\cO(dnL_0F_0/\varepsilon^2)$ whenever
\(
    n \;\geq\; d G^2/\varepsilon^2.
\)
Both the single-snapshot (translation-symmetric) and the two-snapshot variants
achieve the same total complexity, but the former avoids the need to store
snapshots in memory, which can be critical in large-scale settings. Variance
reduction, therefore, provides a practical and theoretical advantage when the
dataset size $n$ is large relative to the noise level, bringing the complexity
closer to the optimal regime of stochastic optimization.

\section{Learning with Helper or Human Feedback}\label{sec:helper}

In many applications, direct function evaluations may not be available or
desirable, but one can obtain indirect feedback in the form of preferences or
comparisons. Examples include:
\begin{itemize}
    \item \textbf{A/B testing:} where we cannot measure the exact loss function,
    but can observe user preferences between two versions of a product or
    service~\cite{kohavi2009controlled}.
    \item \textbf{Reinforcement learning with human feedback (RLHF):} where the
    reward signal is provided via human preference comparisons rather than
    explicit numerical scores~\cite{christiano2017deep, ouyang2022training}.
\end{itemize}
In such scenarios, our framework extends naturally by incorporating a
\emph{helper} (e.g., a human labeler, a proxy model, or a heuristic) that
produces feedback about the relative quality of two states.

\paragraph{$\delta$-inexact feedback.}
We formalize the quality of the helper by requiring that for all $\vx,\vy$,
\[
    \E\big[\,\big|h(\vx) - h(\vy) - (f(\vx) - f(\vy))\big|\,\big]
    \;\leq\; \delta,
\]
that is, the helper provides a $\delta$-accurate approximation of the true
difference in function values. We call this property
\emph{$\delta$-inexact feedback}. This is similar in spirit to the helper framework introduced in \cite{chayti2024optimization} for first-order methods and \cite{chayti2024unified} for second-order methods.

\paragraph{Theoretical guarantee.}
Using $\delta$-inexact feedback in place of exact evaluations in the
stochastic three-point method, we can establish the following guarantee.

\begin{theorem}[Convergence with $\delta$-inexact feedback]
\label{thm:helper}
Let $F_0=f(\vx_0)-f^\star<\infty$ and assume
$(L_0,L_1)$-smoothness (Assumption~\ref{ass:smoothness}) and
directional distributional exploration
(Assumption~\ref{ass:distribution}). With constant step size
$\eta_t\equiv \eta \le \mu_{\mathcal{D}}/L_1$, we have
\[
    \frac{1}{T}\sum_{t=0}^{T-1}\E\|\nabla f(\vx_t)\|
    \;=\;\frac{1}{\mu_{\mathcal{D}}}
    \cO\!\Big(
        \frac{F_0}{\eta T}
        + L_0\,\eta
        + \frac{\delta}{\eta}
    \Big).
\]
Optimizing over $\eta$ yields
\[
    \frac{1}{T}\sum_{t=0}^{T-1}\E\|\nabla f(\vx_t)\|
    \;=\;\frac{1}{\mu_{\mathcal{D}}}
    \cO\!\Big(
        \frac{L_1 F_0}{\mu_{\mathcal{D}} T}
        + \sqrt{\tfrac{L_0 F_0}{T}}
        + \sqrt{L_0\delta}
    \Big).
\]
\end{theorem}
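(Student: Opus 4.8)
The plan is to specialize the general machinery to the helper case $M_t^\pm = h(\vx_t^\pm)$ and then run the standard descent–telescope–optimize argument, with the whole analysis hinging on Lemma~\ref{lem:descent}. That lemma already supplies, for $\eta \le \mu_{\mathcal{D}}/L_1$ and under Assumptions~\ref{ass:smoothness} and~\ref{ass:distribution}, the per-step inequality
\[
f(\vx_{t+1}) \le f(\vx_t) - \mu_{\mathcal{D}}\eta\,\|\nabla f(\vx_t)\| + \tfrac{L_0}{2}\eta^2 + 2\,\E\big[|M_t^+ - f(\vx_t^+)| + |M_t^- - f(\vx_t^-)|\big].
\]
The first idea is to \emph{not} control the two helper errors separately, but to exploit translation invariance: since the update depends on $M_t^\pm$ only through $\mathrm{sign}(M_t^+ - M_t^-)$, the bound remains valid verbatim if $M_t^\pm$ is replaced by $M_t^\pm - c_t$ for any (possibly random) shift $c_t$, because the iterate $\vx_{t+1}$ is unchanged. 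Choosing, for each realization, the pathwise minimizing $c_t$ and invoking~\eqref{eq5:TransSym} collapses the error term to $\E\big|(M_t^+ - M_t^-) - (f(\vx_t^+) - f(\vx_t^-))\big|$.

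Next I would substitute $M_t^\pm = h(\vx_t^\pm)$ and apply the $\delta$-inexact feedback property with $\vx = \vx_t^+$ and $\vy = \vx_t^-$, conditionally on $\vx_t,\vs_t$, which bounds this difference error by exactly $\delta$. This is the entire point of the helper assumption: it controls the error in the \emph{comparison}, not in the absolute values, which is precisely the quantity that survives after the translation optimization. The per-step inequality thus becomes
\[
\E f(\vx_{t+1}) \le \E f(\vx_t) - \mu_{\mathcal{D}}\eta\,\E\|\nabla f(\vx_t)\| + \tfrac{L_0}{2}\eta^2 + \delta.
\]

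From here the computation is routine. I rearrange to isolate $\mu_{\mathcal{D}}\eta\,\E\|\nabla f(\vx_t)\|$, sum over $t = 0,\dots,T-1$, telescope the function-value differences, and use $f(\vx_T) \ge f^\star$ so the telescoped sum is at most $F_0$; dividing by $\mu_{\mathcal{D}}\eta T$ gives the first displayed bound $\tfrac{1}{\mu_{\mathcal{D}}}\cO\!\big(F_0/(\eta T) + L_0\eta + \delta/\eta\big)$. For the optimized form I treat $F_0/T + \delta$ as a single numerator $A$ and minimize $A/\eta + L_0\eta$ over $\eta \le \mu_{\mathcal{D}}/L_1$: when the unconstrained optimum $\sqrt{A/L_0}$ is feasible the value is $2\sqrt{L_0 A}$, and the subadditivity $\sqrt{L_0(F_0/T + \delta)} \le \sqrt{L_0 F_0/T} + \sqrt{L_0\delta}$ produces the two square-root terms, including the accuracy floor $\sqrt{L_0\delta}$; when it is infeasible, saturating at $\eta = \mu_{\mathcal{D}}/L_1$ contributes the $L_1 F_0/(\mu_{\mathcal{D}} T)$ term, as is standard in $(L_0,L_1)$-smooth analyses.

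The main obstacle is the first step: correctly justifying that translation invariance lets one replace the two separate absolute errors in Lemma~\ref{lem:descent} by the single difference error, carried out pathwise (with a realization-dependent $c_t$) to avoid any interchange of infimum and expectation. This is exactly what makes the helper's comparison-based $\delta$ guarantee directly applicable; once the error term is $\delta$, the rest is descent–telescope–optimize. A secondary point to handle carefully is that the constrained optimization also produces a lower-order $L_1\delta/\mu_{\mathcal{D}}$ contribution, which is dominated and absorbed into the $\cO$ without affecting the stated rate.
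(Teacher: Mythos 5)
Your proposal is correct and follows essentially the same route as the paper: the paper's appendix proof applies the general descent inequality (whose derivation already incorporates the pathwise translation-invariance collapse to the difference error), bounds that term by $2\delta$ via the $\delta$-inexact feedback assumption with $(\vx,\vy)=(\vx_t^+,\vx_t^-)$, then telescopes and optimizes $\eta$ under the constraint $\eta\le\mu_{\mathcal{D}}/L_1$ exactly as you describe. The only differences are immaterial constants (the paper carries $2\delta$ rather than $\delta$ per step, and a $\mu_{\mathcal{D}}/2$ coefficient on the gradient term), all absorbed into the $\cO$.
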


\paragraph{Accuracy guarantee.}
By choosing
\[
    T \;=\; \O\!\Big(\tfrac{dL_1}{\varepsilon} + \tfrac{dL_0F_0}{\varepsilon^2}\Big),
\]
we ensure that
\[
    \frac{1}{T}\sum_{t=0}^{T-1}\E\|\nabla f(\vx_t)\|
    \;\leq\; \varepsilon \;+\; \cO(\sqrt{d\delta}).
\]
In other words, convergence can only be guaranteed up to a neighborhood of size
$\cO(\sqrt{d\delta})$, which reflects the intrinsic inexactness of the helper’s
feedback.

\paragraph{Discussion.}
This result shows that inexact comparison feedback degrades convergence only
through the additive term $\sqrt{d\delta}$. When $\delta$ is small, the
algorithm remains effective and achieves rates comparable to those with exact
function evaluations. In practice, this provides theoretical support for
random-search-based optimization in settings such as RLHF and A/B testing,
where exact evaluations are unavailable but pairwise feedback is abundant.

\section{On the Limitations of Momentum for Random Search}

A natural question is whether the variance-reduction ideas developed for
first-order stochastic optimization can be transferred to Random Search. In the
first-order literature, several approaches exploit momentum to mitigate
variance, including Heavy Ball~\cite{polyak1964some}, momentum-based variance
reduction such as STORM~\cite{cutkosky2019momentum}, and implicit gradient
transport~\cite{arnold2019reducing}. These methods help precisely when gradient
noise is high and batches are small.

\paragraph{Naive adaptation.}
In Random Search we never access (nor approximate) gradients; the algorithm
operates purely via function evaluations and their comparisons. It is tempting
to adapt first-order momentum by reusing the same update forms but replacing
stochastic gradients with function \emph{differences}. For example, the Heavy
Ball moving average
\(
  M_t = (1-\beta)M_{t-1} + \beta\,\nabla f_\xi(\vx_t)
\)
translates to
\[
  M_t \;=\; (1-\beta)M_{t-1} \;+\; \beta\big(f_\xi(\vx_t^+) - f_\xi(\vx_t^-)\big),
\]
and more elaborate momentum-based variance-reduction or implicit-transport
schemes can be adapted analogously by plugging function differences in place of
stochastic gradients.

\paragraph{Negative result (why the classical analysis fails).}
In first-order methods, momentum introduces a bias term, \emph{but} the
resulting variance reduction typically dominates, yielding net gains. In our
comparison-based setting, that trade-off collapses: the stochastic term in the
difference estimator scales as
\(
\mathrm{Std}\big[f_\xi(\vx_t^+)-f_\xi(\vx_t^-)\big] \,\propto\, \eta,
\)
because $\vx_t^+$ and $\vx_t^-$ are only $\Theta(\eta)$ apart. At the same
time, the \emph{signal} itself,
\(
f(\vx_t^+)-f(\vx_t^-) \approx 2\eta\,\langle\nabla f(\vx_t),\vs_t\rangle,
\)
also scales linearly with $\eta$. Consequently, the signal-to-noise ratio of
the per-iteration difference is essentially \emph{scale-invariant} in $\eta$,
and exponential averaging (momentum) does not buy the same variance advantage
as in the gradient case. Worse, the momentum recursion accumulates a
\emph{smoothing bias} that depends on how the differences
$\,f(\vx_t^+)-f(\vx_t^-)\,$ drift across iterations; with random directions and
non-stationary iterates, this bias does not shrink with $\eta$ at a rate that
beats the $\Theta(\eta)$ stochastic term. The standard first-order proofs that
balance bias and variance, therefore break: reducing the variance via momentum
no longer outpaces the induced bias, unless one resorts to \emph{large
minibatches}, which negates the intended benefit.

\paragraph{Takeaway.}
While momentum is a central tool for first-order methods, its direct
transcription to Random Search—simply replacing gradients with a function
differences—does not yield theoretical or practical improvements. The core
obstruction is the step-size–dependent structure of both signal and noise in
the difference oracle. Any meaningful momentum-like gain in this
comparison-based regime will likely require \emph{new} constructions (beyond
literal translations of first-order schemata) that explicitly account for the
$\Theta(\eta)$ scaling of both the signal and the noise.

\section{Experiments}\label{sec:experiments}

\paragraph{Setup.}
We compare \emph{Mi2P}, \emph{RSGF}, and \emph{ZO\text{-}CD} on a logistic-loss
classification task using the Breast Cancer Wisconsin dataset. After a
train/test split, the training set has \(n=455\) samples and \(d=30\) features.
For $y_i\in\{\pm 1\}$, we optimize
\[
f(\vx)=\tfrac{1}{n}\sum_{i=1}^n \log\!\bigl(1+\exp(-y_i\langle a_i,x\rangle)\bigr)
+ \tfrac{\lambda}{2n}\|x\|^2.
\]
For batch sizes \(b\in\{1,5,10,25,50,100\}\) we run 20 trials each. Step sizes
are tuned once per \(b\) in a pilot phase, then fixed. All methods are compared
under the same \emph{function-query budget}, ensuring aligned $x$-axes in plots.
We report \(f(\vx)\) versus queries (mean $\pm$ one s.d.\ over runs).

\paragraph{Methods.}
Each iteration evaluates a minibatch objective \(F_B\) with $|B|=b$:
\begin{itemize}\setlength{\itemsep}{0pt}
\item \textbf{RSGF:} sample $u$ on the unit sphere, set
\(g=\tfrac{F_B(x+\mu u)-F_B(\vx)}{\mu}\,u\), update $x\gets x-\eta g$.
\item \textbf{ZO\text{-}CD:} coordinate-wise two-point estimator
\(g=\sum_{i=1}^d \tfrac{F_B(x+\mu e_i)-F_B(x-\mu e_i)}{2\mu}\,e_i\),
then $x\gets x-\alpha g$.
\end{itemize}
Mi2P uses only the sign of $F_B(x{+}\alpha s)-F_B(x{-}\alpha s)$.
Per iteration, Mi2P and RSGF cost $2b$ queries, ZO\text{-}CD costs $2bd$.

\paragraph{Results.}
Figure~\ref{fig:bc_all} shows $f(\vx)$ versus queries. For very small batches
($b=1,5$), Mi2P underperforms due to noisy comparisons that can flip the
ordering of $x{+}\alpha s$ and $x{-}\alpha s$. As $b$ increases, comparisons
become reliable: Mi2P matches or surpasses RSGF and consistently outperforms
ZO\text{-}CD. This aligns with the theory that the misranking probability
decreases with $b$, yielding more accurate updates.

\begin{figure}[H]
  \centering
  \begin{subfigure}[t]{0.32\linewidth}
    \includegraphics[width=\linewidth]{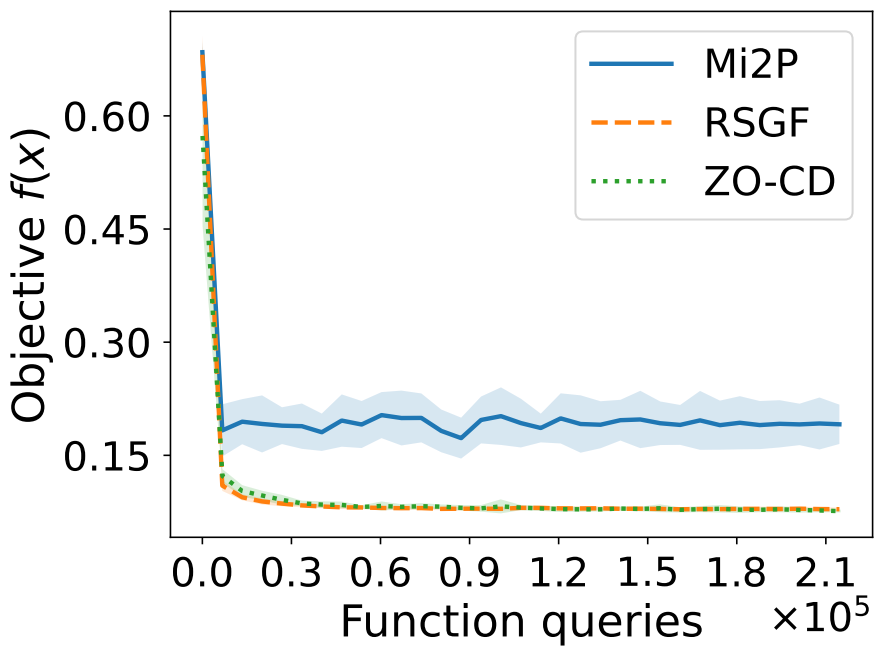}
    \caption{$b=1$}
  \end{subfigure}\hfill
  \begin{subfigure}[t]{0.32\linewidth}
    \includegraphics[width=\linewidth]{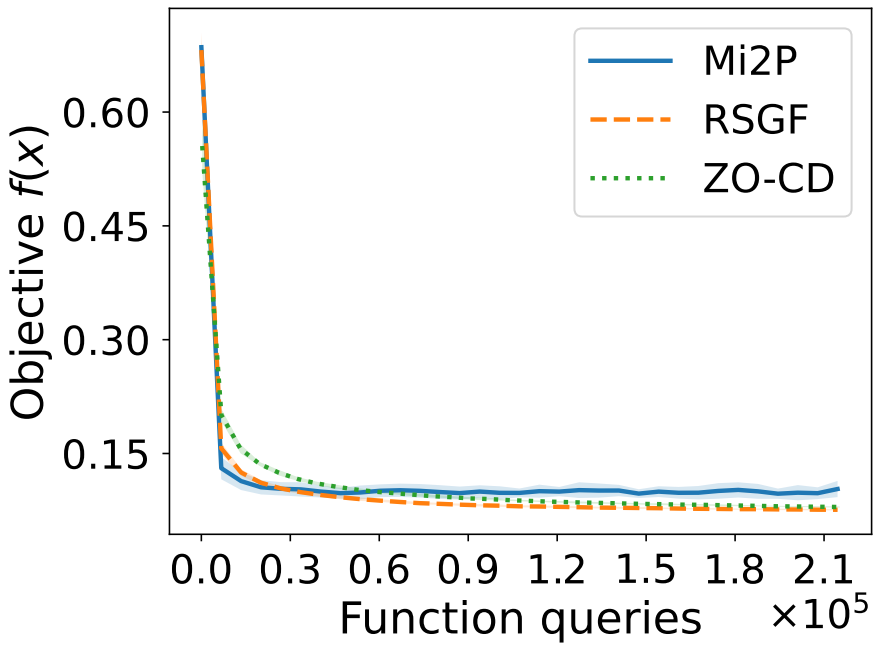}
    \caption{$b=5$}
  \end{subfigure}\hfill
  \begin{subfigure}[t]{0.32\linewidth}
    \includegraphics[width=\linewidth]{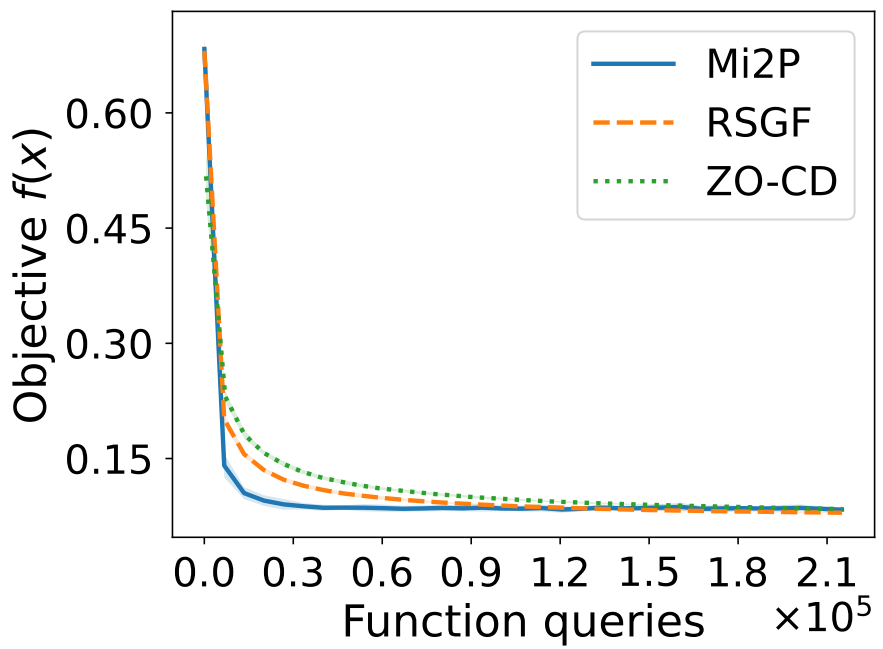}
    \caption{$b=10$}
  \end{subfigure}\\[4pt]
  \begin{subfigure}[t]{0.32\linewidth}
    \includegraphics[width=\linewidth]{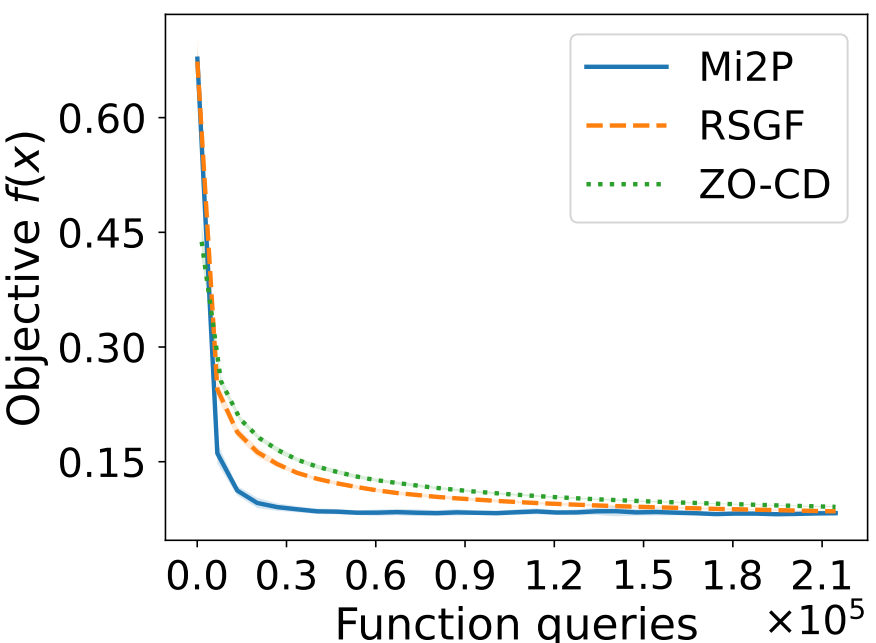}
    \caption{$b=25$}
  \end{subfigure}\hfill
  \begin{subfigure}[t]{0.32\linewidth}
    \includegraphics[width=\linewidth]{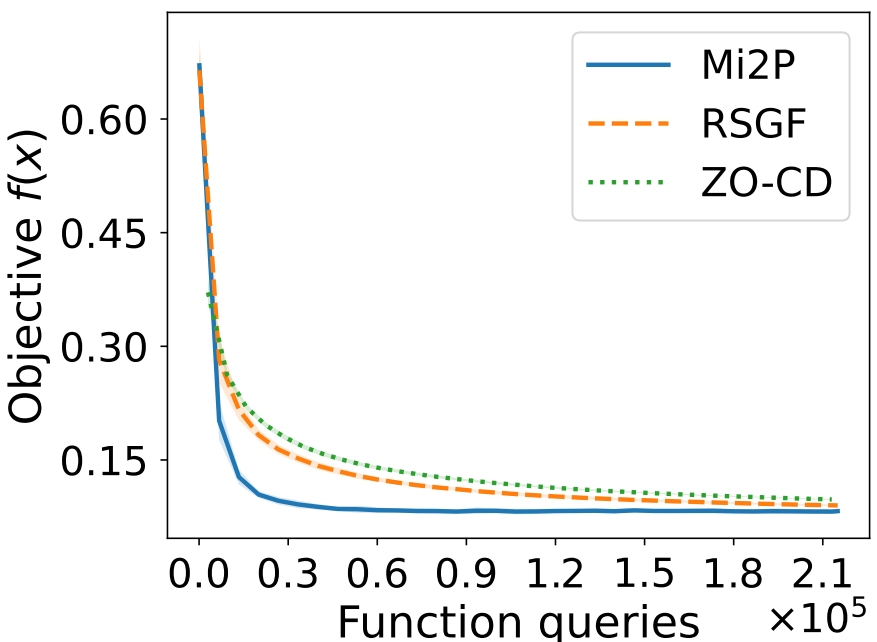}
    \caption{$b=50$}
  \end{subfigure}\hfill
  \begin{subfigure}[t]{0.32\linewidth}
    \includegraphics[width=\linewidth]{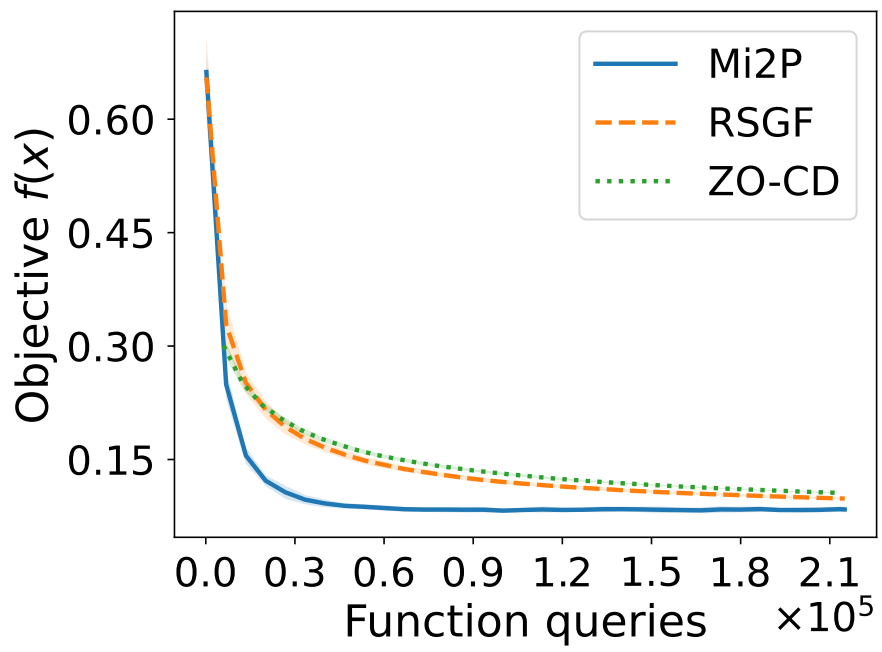}
    \caption{$b=100$}
  \end{subfigure}
  \caption{\textbf{Breast Cancer (logistic):} $f(\vx)$ vs.\ queries for different
  batch sizes. Mean $\pm$ one s.d.\ across 20 runs.}
  \label{fig:bc_all}
\end{figure}

\section{Limitations, Future Work, and Conclusion}

\paragraph{Limitations.}
Our analysis assumes $(L_0,L_1)$-smoothness and bounded variance. Extensions to
weaker smoothness or heavy-tailed noise remain open. In addition, while momentum
is highly effective in first-order methods, our negative result shows that its
direct adaptation fails for Random Search, leaving variance reduction reliant on
larger minibatches.

\paragraph{Future work.}
Promising directions include: (i) designing momentum-like schemes tailored to
function differences, (ii) exploring more memory-efficient variance reduction
methods, and (iii) evaluating Random Search in large-scale human-in-the-loop
settings such as online A/B testing or RLHF.

\paragraph{Conclusion.}
We gave a unified analysis of Random Search under weak average smoothness,
standard sample smoothness, and finite-sum objectives with variance reduction,
and extended the framework to inexact human/helper feedback. Our results clarify
the attainable rates of Random Search and highlight both its strengths and open
challenges.

\newpage
\bibliography{stp}


\clearpage
\appendix

\onecolumn
\setlength{\footskip}{24pt} 
\raggedbottom 


\aistatstitle{
Supplementary Materials}

\section{Preliminaries}\begin{propbox}
\begin{lemma}[Variance of a minibatch average under bounded variance]
\label{lem:var-average}
Let $(Z_i)_{i=1}^b$ be i.i.d.\ real-valued random variables with
$\E[Z_1]=0$ and $\Var(Z_1)\le \sigma^2<\infty$. Define the minibatch
average $\overline{Z} := \tfrac{1}{b}\sum_{i=1}^b Z_i$. Then
\[
    \E[\overline{Z}] = 0
    \qquad\text{and}\qquad
    \Var(\overline{Z}) = \frac{\Var(Z_1)}{b} \;\le\; \frac{\sigma^2}{b}.
\]
\end{lemma}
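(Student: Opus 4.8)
The plan is to establish the two assertions separately by direct computation, relying only on linearity of expectation and the additivity of variance for uncorrelated random variables; since the statement is a standard second-moment identity, the work is entirely routine and the emphasis is on flagging exactly where each hypothesis is used.

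First I would compute the mean. By linearity of expectation together with the identical distribution of the $Z_i$,
\[
\E[\overline{Z}] = \frac{1}{b}\sum_{i=1}^b \E[Z_i] = \frac{1}{b}\cdot b\cdot \E[Z_1] = 0,
\]
where the last step invokes the hypothesis $\E[Z_1]=0$. This settles the first claim. Next I would handle the variance. Because $\overline{Z}$ is centered, $\Var(\overline{Z})=\E[\overline{Z}^2]$, and factoring out the scalar gives $\Var(\overline{Z})=\tfrac{1}{b^2}\Var\!\big(\sum_{i=1}^b Z_i\big)$. The key step is to expand the variance of the sum as
\[
\Var\Big(\sum_{i=1}^b Z_i\Big)=\sum_{i=1}^b \Var(Z_i)+\sum_{i\neq j}\Cov(Z_i,Z_j).
\]
Here independence enters: for $i\neq j$ it implies $\Cov(Z_i,Z_j)=0$, so every off-diagonal term vanishes and only the $b$ diagonal terms survive. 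Using identical distribution, $\Var(Z_i)=\Var(Z_1)$ for each $i$, so the sum equals $b\,\Var(Z_1)$. Combining, $\Var(\overline{Z})=\tfrac{1}{b^2}\cdot b\,\Var(Z_1)=\Var(Z_1)/b$, and the bounded-variance hypothesis $\Var(Z_1)\le\sigma^2$ yields $\Var(\overline{Z})\le \sigma^2/b$.

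I do not anticipate any genuine obstacle: the only points requiring care are the explicit appeal to independence (more precisely, to uncorrelatedness, which independence implies) in order to annihilate the cross terms, and the use of identical distribution to collapse the diagonal into the single factor $\Var(Z_1)$. No measurability or continuity subtlety arises; the standing assumption $\sigma^2<\infty$ guarantees that all second moments exist, so each expression above is well defined.
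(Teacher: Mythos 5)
Your proof is correct and follows essentially the same route as the paper's: linearity of expectation for the mean, and the expansion of the variance of the sum into diagonal and covariance terms, with independence killing the cross terms. No further comment is needed.
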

\end{propbox}
\begin{proof}
Linearity of expectation gives $\E[\overline{Z}]=\tfrac{1}{b}\sum_{i=1}^b \E[Z_i]=0$.
For the variance,
\[
    \Var(\overline{Z})
    = \Var\!\Big(\frac{1}{b}\sum_{i=1}^b Z_i\Big)
    = \frac{1}{b^2}\sum_{i=1}^b \Var(Z_i)
      + \frac{2}{b^2}\sum_{1\le i<j\le b}\Cov(Z_i,Z_j).
\]
Independence implies $\Cov(Z_i,Z_j)=0$ for $i\neq j$, so
$\Var(\overline{Z})=\tfrac{1}{b^2}\cdot b\,\Var(Z_1)=\Var(Z_1)/b \le \sigma^2/b$.
\end{proof}

\paragraph{Vector variant.}
If $(\vz_i)_{i=1}^b$ are i.i.d.\ $\R^d$-valued with $\E[\vz_1]=\vzero$ and
$\E\|\vz_1\|^2\le \sigma^2$, then for $\overline{\vz} := \tfrac{1}{b}\sum_{i=1}^b \vz_i$,
\[
  \E\big[\overline{\vz}\big]=\vzero
  \qquad\text{and}\qquad
  \E\|\overline{\vz}\|^2
  = \frac{1}{b}\,\E\|\vz_1\|^2 \;\le\; \frac{\sigma^2}{b}.
\]
The proof is identical, using $\E\|\sum_i \vz_i\|^2 = \sum_i \E\|\vz_i\|^2$
by independence and zero mean.

\medskip
\begin{propbox}
\begin{lemma}[Jensen's inequality]
\label{lem:jensen}
Let $\phi:\R\to\R$ be convex and let $X$ be an integrable real-valued random
variable. Then
\[
    \phi\big(\E[X]\big) \;\le\; \E\big[\phi(X)\big].
\]
In particular, for $\phi(x)=|x|$ we have $|\E[X]|\le \E[|X|]$.
\end{lemma}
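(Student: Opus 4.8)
The plan is to use the supporting-line (subgradient) characterization of convexity. Since $\phi:\R\to\R$ is convex and finite everywhere, at the point $m:=\E[X]$ the one-sided derivatives $\phi'_-(m)$ and $\phi'_+(m)$ exist, are finite, and satisfy $\phi'_-(m)\le\phi'_+(m)$; consequently any slope $a$ in that interval defines a supporting line, meaning that for every $x\in\R$,
\[
    \phi(x) \;\ge\; \phi(m) + a\,(x-m).
\]
I would first establish (or simply invoke) this inequality, which is the standard fact that a finite convex function on the line lies above each of its supporting lines.

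Next, I would substitute the random variable $X$ in place of $x$ to obtain the almost-sure pointwise bound $\phi(X)\ge\phi(m)+a\,(X-m)$. Taking expectations of both sides and using linearity together with $\E[X]=m$, the affine term collapses:
\[
    \E[\phi(X)] \;\ge\; \phi(m) + a\,(\E[X]-m) \;=\; \phi(m) \;=\; \phi(\E[X]),
\]
which is exactly Jensen's inequality. Here integrability of $X$ guarantees $m$ is finite (so the supporting line is taken at a genuine interior point of the domain), and the displayed lower bound shows $\E[\phi(X)]$ is well defined as an element of $(-\infty,+\infty]$, so the inequality is meaningful even when the right-hand side is infinite.

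Finally, the special case follows by taking $\phi(x)=|x|$, which is convex on $\R$; Jensen then yields $|\E[X]|=\phi(\E[X])\le\E[\phi(X)]=\E[|X|]$.

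The main obstacle—indeed the only nontrivial ingredient—is justifying the existence of the supporting line, i.e.\ a subgradient of $\phi$ at $m$. This rests on the monotonicity of difference quotients of a convex function, which forces the one-sided derivatives to exist with $\phi'_-(m)\le\phi'_+(m)$, so any $a$ in that interval works. For the intended application ($\phi=|\cdot|$) one can even exhibit the supporting slope explicitly by taking $a=\sign(\E[X])$, which sidesteps the general subgradient argument entirely.
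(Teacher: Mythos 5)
Your proof is correct and follows essentially the same route as the paper's: both take a subgradient (supporting line) of $\phi$ at $\E[X]$, bound $\phi(X)$ from below by that affine function, and take expectations so the linear term vanishes. You add a bit more justification for the existence of the supporting line, but the argument is the same.
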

\end{propbox}
\begin{proof}
By convexity, for any $x_0\in\R$ there exists a subgradient $g\in\partial\phi(x_0)$
such that $\phi(x)\ge \phi(x_0) + g\,(x-x_0)$ for all $x\in\R$. Taking $x_0=\E[X]$
and then expectations,
\[
  \E[\phi(X)] \;\ge\; \phi(\E[X]) + g\,\E[X-\E[X]] \;=\; \phi(\E[X]).
\]
For $\phi(x)=x^2$, convexity gives $\E[|X|]\le \sqrt{\E[X^2]}$ immediately.
\end{proof}

\begin{propbox}
 \begin{lemma}[Adaptive Smoothness Inequality] \label{lem:adaptive_smoothness} Consider a differentiable function $f: \R^d \to \mathbb{R}$ satisfying the following property for all $\vx, \vx^\prime \in \R^d$: $$ \big\|\nabla f(\vx) - \nabla f(\vx^\prime)\big\| \leq \min \big(\cL(\vx), \cL(\vx^\prime)\big) \|\vx - \vx^\prime\|, $$ for some function $\cL: \R^d \to \mathbb{R}^{+}$. Then, the function value is bounded by the quadratic approximation at $\vx$ as follows: $$ \big|f(\vx^\prime) - f(\vx) - \nabla f(\vx)^\top (\vx^\prime - \vx)\big| \leq \dfrac{\cL(\vx)}{2}\|\vx^\prime - \vx\|^2. $$ \end{lemma}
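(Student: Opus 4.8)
The plan is to reduce the statement to the one-dimensional fundamental theorem of calculus along the segment joining $\vx$ and $\vx^\prime$, and then exploit the $\min$ in the hypothesis to keep the Lipschitz factor anchored at the fixed point $\vx$. Concretely, I would parametrize the segment by $\vx_t := \vx + t(\vx^\prime - \vx)$ for $t \in [0,1]$ and set $g(t) := f(\vx_t)$. Differentiability of $f$ gives $g^\prime(t) = \nabla f(\vx_t)^\top (\vx^\prime - \vx)$, so that $f(\vx^\prime) - f(\vx) = \int_0^1 \nabla f(\vx_t)^\top (\vx^\prime - \vx)\,dt$. Subtracting the constant linear term, written as $\nabla f(\vx)^\top(\vx^\prime-\vx) = \int_0^1 \nabla f(\vx)^\top(\vx^\prime-\vx)\,dt$, isolates the integral remainder
\[
 f(\vx^\prime) - f(\vx) - \nabla f(\vx)^\top(\vx^\prime-\vx) = \int_0^1 \big(\nabla f(\vx_t) - \nabla f(\vx)\big)^\top (\vx^\prime-\vx)\,dt.
\]

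Next I would take absolute values, pass them inside the integral (triangle inequality for integrals), and apply Cauchy--Schwarz to each integrand to get the bound $\int_0^1 \norm{\nabla f(\vx_t) - \nabla f(\vx)}\,\norm{\vx^\prime-\vx}\,dt$. At this point the hypothesis is invoked on the pair $(\vx_t,\vx)$: since $\norm{\vx_t - \vx} = t\,\norm{\vx^\prime-\vx}$ and $\min\!\big(\cL(\vx_t),\cL(\vx)\big) \le \cL(\vx)$, it yields $\norm{\nabla f(\vx_t) - \nabla f(\vx)} \le \cL(\vx)\,t\,\norm{\vx^\prime-\vx}$. Substituting this and using $\int_0^1 t\,dt = \tfrac12$ gives exactly $\tfrac{\cL(\vx)}{2}\norm{\vx^\prime-\vx}^2$, as claimed.

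The only genuinely load-bearing step is the use of the $\min$: it is what allows the path-dependent factor $\cL(\vx_t)$ to be replaced uniformly by the constant $\cL(\vx)$, so that $\cL$ pulls out of the integral and the fixed-point constant $\cL(\vx)$—rather than a supremum of $\cL$ over the segment—appears on the right-hand side. Everything else is the standard second-order Taylor-remainder estimate. A minor technical point to confirm is that $t \mapsto \nabla f(\vx_t)$ is integrable on $[0,1]$; this is immediate since the hypothesis forces $\nabla f$ to be continuous, so $g$ is $C^1$ and the fundamental theorem of calculus applies without issue.
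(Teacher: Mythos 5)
Your argument is correct and follows exactly the same route as the paper's proof: the fundamental theorem of calculus along the segment, Cauchy--Schwarz on the integrand, the bound $\min(\cL(\vx_t),\cL(\vx)) \le \cL(\vx)$ applied to the pair $(\vx_t,\vx)$, and the integral $\int_0^1 t\,dt = \tfrac12$. Your closing remark on the continuity of $\nabla f$ (hence integrability of the integrand) is a small point the paper leaves implicit, but otherwise the two proofs coincide.
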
 
 \end{propbox}
 \begin{proof} We begin with the fundamental theorem of calculus for the function $f$: \begin{align*} f(\vx^\prime) &= f(\vx) + \int_0^1 \nabla f(\vx + t(\vx^\prime - \vx))^\top (\vx^\prime - \vx) dt \\ &= f(\vx) + \nabla f(\vx)^\top (\vx^\prime - \vx) + \int_0^1 \Big[\nabla f(\vx + t(\vx^\prime - \vx)) - \nabla f(\vx)\Big]^\top (\vx^\prime - \vx) dt \end{align*} Rearranging the terms, we isolate the error term: $$ f(\vx^\prime) - f(\vx) - \nabla f(\vx)^\top (\vx^\prime - \vx) = \int_0^1 \Big[\nabla f(\vx + t(\vx^\prime - \vx)) - \nabla f(\vx)\Big]^\top (\vx^\prime - \vx) dt $$ Taking the absolute value and applying the generalized Cauchy-Schwarz inequality, followed by the adaptive smoothness property: \begin{align*} \big|f(\vx^\prime) - f(\vx) - \nabla f(\vx)^\top (\vx^\prime - \vx)\big| &\leq \Big|\int_0^1 \Big[\nabla f(\vx + t(\vx^\prime - \vx)) - \nabla f(\vx)\Big]^\top (\vx^\prime - \vx) dt\Big| \\ &\leq \int_0^1 \Big\|\nabla f(\vx + t(\vx^\prime - \vx)) - \nabla f(\vx)\Big\| \cdot \Big\|\vx^\prime - \vx\Big\| dt \end{align*} Using the adaptive smoothness property with $\vx_t = \vx + t(\vx^\prime - \vx)$ and $\vx$: $$ \big\|\nabla f(\vx_t) - \nabla f(\vx)\big\| \leq \min \big(\cL(\vx_t), \cL(\vx)\big) \|\vx_t - \vx\| $$ Since $\min (\cL(\vx_t), \cL(\vx)) \le \cL(\vx)$ and $\|\vx_t - \vx\| = \|t(\vx^\prime - \vx)\| = t \|\vx^\prime - \vx\|$, we continue the inequality: \begin{align*} \big|f(\vx^\prime) - f(\vx) - \nabla f(\vx)^\top (\vx^\prime - \vx)\big| &\leq \int_0^1 \Big[\cL(\vx) \cdot t \|\vx^\prime - \vx\|\Big] \cdot \|\vx^\prime - \vx\| dt \\ &\leq \cL(\vx) \|\vx^\prime - \vx\|^2 \int_0^1 t dt \\ &= \cL(\vx) \|\vx^\prime - \vx\|^2 \Big[\frac{t^2}{2}\Big]_0^1 \\ &= \dfrac{\cL(\vx)}{2}\|\vx^\prime - \vx\|^2. \end{align*} \end{proof}

\section{Analysis of the General Algorithm}

We consider the optimization problem
\[
    \min_{\vx\in\R^d} f(\vx).
\]

At iteration $t$, Algorithm~\ref{alg:srs} samples a random direction
$\vs_t \sim \cD$, defines two scalar quantities $M_t^\pm$, and updates
\[
    \vx_{t+1}
    =
    \begin{cases}
        \vx_t + \eta_t \vs_t, & \text{if } M_t^+ \le M_t^-, \\[4pt]
        \vx_t - \eta_t \vs_t, & \text{otherwise}.
    \end{cases}
\]
Equivalently,
\[
    \vx_{t+1}
    = \vx_t - \eta_t\,\operatorname{sign}(M_t^+ - M_t^-)\,\vs_t.
\]

We assume that $f$ is $(L_0,L_1)$-smooth, meaning that for all
$\vx,\vy\in\R^d$,
\begin{equation}
\label{eq:L0L1-smooth}
    \|\nabla f(\vx) - \nabla f(\vy)\|
    \le (L_0 + L_1 \|\nabla f(\vx)\|)\,\|\vx-\vy\|.
\end{equation}
By Lemma~\ref{lem:adaptive_smoothness}, this implies that for all
$\vx,\vy\in\R^d$,
\begin{equation}
\label{eq:smooth-upper-bound}
    f(\vy)
    \le
    f(\vx)
    + \langle \nabla f(\vx), \vy - \vx\rangle
    + \frac{L_0 + L_1 \|\nabla f(\vx)\|}{2}\,\|\vy-\vx\|^2.
\end{equation}
\begin{propbox}
\begin{theorem}[General Descent Inequality]
\label{thm:general-descent}
Let $f$ satisfy~\eqref{eq:L0L1-smooth} and let the random directions
$\vs_t\sim\cD$ satisfy Assumption~\ref{ass:distribution}, namely
\(
    \E\|\vs_t\|^2 = 1
\)
and
\(
    \E[\,|\langle\nabla f(\vx),\vs_t\rangle|\,] \ge \mu_{\cD}\|\nabla f(\vx)\|.
\)
Then, for any step size $\eta_t \le \mu_{\cD}/L_1$,
the update defined above satisfies
\begin{align}
\E[f(\vx_{t+1})]
&\le
\E[f(\vx_t)]
- \tfrac{\mu_{\cD}}{2}\,\eta_t\,\E\|\nabla f(\vx_t)\|
+ \tfrac{L_0}{2}\,\eta_t^2
+ 2\,\E\!\left[
    \big|
        M_t^+ - M_t^- - (f(\vx_t^+) - f(\vx_t^-))
    \big|
\right],
\label{eq:general-descent}
\end{align}
where $\vx_t^\pm = \vx_t \pm \eta_t \vs_t$.
\end{theorem}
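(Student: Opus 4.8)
The plan is to prove the per-step inequality by conditioning on the $\sigma$-field generated by $\vx_t$, establishing a bound that holds pointwise over the fresh randomness $(\vs_t,M_t^\pm)$, and then taking total expectations via the tower property. Write $\E_t$ for the conditional expectation, and abbreviate $\Delta_f:=f(\vx_t^+)-f(\vx_t^-)$ and $\Delta_M:=M_t^+-M_t^-$. The argument rests on three ingredients: a selection-error bound that tolerates a wrong comparison sign, the quadratic upper bound \eqref{eq:smooth-upper-bound}, and the exploration property of $\cD$ from Assumption~\ref{ass:distribution}.

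First I would control the cost of choosing the next iterate by the (noisy) sign of $\Delta_M$ rather than by $f$. Setting $\tau:=-\sign(\Delta_M)\in\{-1,+1\}$ (ties broken toward $+$), the update reads $\vx_{t+1}=\vx_t+\eta_t\tau\vs_t$, and one has the exact identity $f(\vx_{t+1})=\tfrac12\big(f(\vx_t^+)+f(\vx_t^-)\big)+\tfrac{\tau}{2}\Delta_f$. Splitting $\Delta_f=\Delta_M+(\Delta_f-\Delta_M)$, using $\tau\,\Delta_M=-|\Delta_M|$ together with $\tfrac{\tau}{2}(\Delta_f-\Delta_M)\le\tfrac12|\Delta_f-\Delta_M|$, and applying the reverse triangle inequality $|\Delta_M|\ge|\Delta_f|-|\Delta_f-\Delta_M|$, I obtain, pointwise,
\begin{equation*}
 f(\vx_{t+1})\;\le\;\min\big(f(\vx_t^+),f(\vx_t^-)\big)+\big|\Delta_M-\Delta_f\big|,
\end{equation*}
where I also used $\tfrac12\big(f(\vx_t^+)+f(\vx_t^-)\big)-\tfrac12|\Delta_f|=\min(f(\vx_t^+),f(\vx_t^-))$. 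This is the step that exploits translation invariance: since only the difference $\Delta_M$ drives the decision, a misranking costs at most the error in $\Delta_M$, not the two individual errors $|M_t^\pm-f(\vx_t^\pm)|$.

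Next I would bound the minimum by smoothness. Applying \eqref{eq:smooth-upper-bound} at $\vx_t$ to $\vx_t^\pm=\vx_t\pm\eta_t\vs_t$ and taking the smaller of the two signs---the quadratic term is identical for both---gives, via $\min(a,-a)=-|a|$,
\begin{equation*}
 \min\big(f(\vx_t^+),f(\vx_t^-)\big)\;\le\;f(\vx_t)-\eta_t|\langle\nabla f(\vx_t),\vs_t\rangle|+\tfrac{L_0+L_1\|\nabla f(\vx_t)\|}{2}\eta_t^2\|\vs_t\|^2.
\end{equation*}
Combining with the selection-error bound and taking $\E_t$, Assumption~\ref{ass:distribution} supplies $\E_t\|\vs_t\|^2=1$ and $\E_t|\langle\nabla f(\vx_t),\vs_t\rangle|\ge\mu_{\cD}\|\nabla f(\vx_t)\|$, so the linear term becomes $-\mu_{\cD}\eta_t\|\nabla f(\vx_t)\|$. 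I would then group the two $\|\nabla f(\vx_t)\|$-terms as $-\eta_t\|\nabla f(\vx_t)\|\big(\mu_{\cD}-\tfrac{L_1\eta_t}{2}\big)$ and invoke the hypothesis $\eta_t\le\mu_{\cD}/L_1$, which gives $\mu_{\cD}-\tfrac{L_1\eta_t}{2}\ge\tfrac{\mu_{\cD}}{2}$ and hence (as $\eta_t\|\nabla f(\vx_t)\|\ge0$) leaves $-\tfrac{\mu_{\cD}}{2}\eta_t\|\nabla f(\vx_t)\|+\tfrac{L_0}{2}\eta_t^2$. Taking total expectations yields the claim; the selection error in fact appears with constant $1$, so the stated constant $2$ holds a fortiori.

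The only genuinely non-routine step is the selection-error bound: because the algorithm uses nothing but the sign of a noisy comparison, the crux is to certify that a wrong comparison is penalized only through $|\Delta_M-\Delta_f|$. Once that is in place, the smoothness expansion, the exploration lower bound, and the $L_1$-absorption enabled by $\eta_t\le\mu_{\cD}/L_1$ are standard manipulations.
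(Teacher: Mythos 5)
Your proof is correct, and it follows the same overall skeleton as the paper's (smoothness expansion of $f(\vx_t^\pm)$, the $\min$ over the two signs producing $-\eta_t|\langle\nabla f(\vx_t),\vs_t\rangle|$, the exploration lower bound, and absorption of the $L_1$ term via $\eta_t\le\mu_{\cD}/L_1$), but it handles the selection error by a genuinely different and cleaner route. The paper first bounds $M_t^\pm$ through the \emph{individual} errors $|M_t^\pm-f(\vx_t^\pm)|$, pays for them twice (once to pass from $f(\vx_t^\pm)$ to $M_t^\pm$ and once to return from $M_t^+$ to $f(\vx_{t+1})$), and only afterwards invokes the translation-invariance/optimal-shift argument to convert $2\sum_{i\in\{\pm\}}|M_t^i-f(\vx_t^i)|$ into the difference form. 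You instead start from the exact identity $f(\vx_{t+1})=\tfrac12(f(\vx_t^+)+f(\vx_t^-))+\tfrac{\tau}{2}\Delta_f$ and use the reverse triangle inequality to get $f(\vx_{t+1})\le\min(f(\vx_t^+),f(\vx_t^-))+|\Delta_M-\Delta_f|$ directly, so translation invariance is automatic rather than a separate step, and you obtain the error term with constant $1$ instead of $2$ --- a strictly sharper inequality that implies the stated one. (Incidentally, your derivation is consistent with the appendix identity $\min_{C}\sum_{i\in\{\pm\}}|M_t^i-C-f(\vx_t^i)|=|\Delta_M-\Delta_f|$; the factor $\tfrac12$ appearing in the main-text display \eqref{eq5:TransSym} is a typo.) No gaps.
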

\end{propbox}
\begin{proof}
Applying~\eqref{eq:smooth-upper-bound} with
$\vy = \vx_t^\pm = \vx_t \pm \eta_t \vs_t$ and $\vx = \vx_t$ gives
\begin{equation*}
    f(\vx_t^{\pm})
    \le
    f(\vx_t)
    \pm \eta_t \langle \nabla f(\vx_t), \vs_t\rangle
    + \frac{L_0 + L_1 \|\nabla f(\vx_t)\|}{2}\,\eta_t^2 \|\vs_t\|^2.
\end{equation*}
Since $M_t^\pm$ are estimates of $f(\vx_t^\pm)$, we have
\begin{equation*}
    M_t^\pm
    \le
    f(\vx_t)
    \pm \eta_t \langle \nabla f(\vx_t), \vs_t\rangle
    + \frac{L_0 + L_1 \|\nabla f(\vx_t)\|}{2}\,\eta_t^2\|\vs_t\|^2
    + |M_t^\pm - f(\vx_t^\pm)|.
\end{equation*}
Hence,
\begin{equation}
\label{eq:M-ineq}
    M_t^\pm
    \le
    f(\vx_t)
    \pm \eta_t \langle \nabla f(\vx_t), \vs_t\rangle
    + \tfrac{L_0 + L_1 \|\nabla f(\vx_t)\|}{2}\,\eta_t^2\|\vs_t\|^2
    + \sum_{i\in\{\pm\}} |M_t^i - f(\vx_t^i)|.
\end{equation}

Suppose $M_t^+ \le M_t^-$. Then $\vx_{t+1}=\vx_t^+$ and from
\eqref{eq:M-ineq},
\begin{align*}
    M_t^+
    &\le
    f(\vx_t)
    - \eta_t |\langle \nabla f(\vx_t), \vs_t\rangle|
    + \tfrac{L_0 + L_1 \|\nabla f(\vx_t)\|}{2}\,\eta_t^2\|\vs_t\|^2
    + \sum_{i\in\{\pm\}} |M_t^i - f(\vx_t^i)|.
\end{align*}
Since $f(\vx_{t+1}) = f(\vx_t^+)\le M_t^+ + |M_t^+ - f(\vx_t^+)|$, we obtain
\[
    f(\vx_{t+1})
    \le
    f(\vx_t)
    - \eta_t |\langle \nabla f(\vx_t), \vs_t\rangle|
    + \tfrac{L_0 + L_1 \|\nabla f(\vx_t)\|}{2}\,\eta_t^2\|\vs_t\|^2
    + 2 \sum_{i\in\{\pm\}} |M_t^i - f(\vx_t^i)|.
\]
The same bound holds symmetrically when $M_t^- < M_t^+$.

Taking expectations and using $\E\|\vs_t\|^2=1$ and
$\E[|\langle\nabla f(\vx_t),\vs_t\rangle|]\ge \mu_{\cD}\|\nabla f(\vx_t)\|$,
we get
\begin{align}
\E[f(\vx_{t+1})]
&\le
\E[f(\vx_t)]
- \eta_t \mu_{\cD}\E\|\nabla f(\vx_t)\|
+ \tfrac{L_0 + L_1\E\|\nabla f(\vx_t)\|}{2}\eta_t^2
+ 2\sum_{i\in\{\pm\}}\E|M_t^i - f(\vx_t^i)|.
\label{eq:pre-translation}
\end{align}

By the translation symmetry argument, shifting both $M_t^+$ and $M_t^-$ by any
constant $C_t$ does not change the update rule. Thus, we can replace the last
term by its optimal shift:
\[
    \min_{C_t}
    \sum_{i\in\{\pm\}} |M_t^i - C_t - f(\vx_t^i)|
    = |M_t^+ - M_t^- - (f(\vx_t^+) - f(\vx_t^-))|.
\]
Substituting into~\eqref{eq:pre-translation} and rearranging yields
\begin{align*}
\E[f(\vx_{t+1})]
&\le
\E[f(\vx_t)]
- \eta_t\big(\mu_{\cD} - \tfrac{L_1}{2}\eta_t\big)\E\|\nabla f(\vx_t)\|
+ \tfrac{L_0}{2}\eta_t^2
+ 2\E\big|M_t^+ - M_t^- - (f(\vx_t^+) - f(\vx_t^-))\big|.
\end{align*}
Finally, since $\eta_t\le\mu_{\cD}/L_1$, we have
$\mu_{\cD}-\tfrac{L_1}{2}\eta_t\ge \mu_{\cD}/2$, giving the claimed inequality
\eqref{eq:general-descent}.
\end{proof}

\section{General Stochastic Functions with Subsampling}

In this section, we consider the case where we define:
\[
   M_t^\pm := \frac{1}{b} \sum_{j=1}^b f_{\xi^t_j}(\vx_t^\pm),
   \qquad \{\xi^t_j\}_{j=1}^b \ \text{i.i.d.}
\]

\subsection{Analysis under Average Smoothness}

Suppose that $f$ satisfies the $(L_0,L_1)$-smoothness condition
\eqref{eq:L0L1-smooth} and that the random directions $\vs_t \sim \cD$
satisfy Assumption~\ref{ass:distribution}, i.e.
$\E[\|\vs_t\|^2]=1$ and
$\E[|\langle \nabla f(\vx_t),\vs_t\rangle|] \ge \mu_\cD \|\nabla f(\vx_t)\|$.

Then, for any $\eta_t=\eta\le\mu_\cD/L_1$, we have
\begin{align}
\E[f(\vx_{t+1})]
&\le
\E[f(\vx_t)]
- \tfrac{\mu_{\cD}}{2}\,\eta\,\E\|\nabla f(\vx_t)\|
+ \tfrac{L_0}{2}\,\eta^2
+ 2\,\E\!\left[
    \big|
        M_t^+ - M_t^- - (f(\vx_t^+) - f(\vx_t^-))
    \big|
\right].
\label{eq:avg-smoothness-initial}
\end{align}
Let $X_j = f_{\xi^t_j}(\vx_t^+) - f_{\xi^t_j}(\vx_t^-) - (f(\vx_t^+) - f(\vx_t^-))$.
Then $\E[X_j]=0$ and suppose $\E[X_j^2]\le\sigma_0^2 < \infty$.

Using the independence of $\xi^t_j$ and applying Jensen's
inequality and Lemma~\ref{lem:var-average}, it holds that
\begin{align}
\E\!\left[
    \big|
        M_t^+ - M_t^- - (f(\vx_t^+) - f(\vx_t^-))
    \big|
\right]
&= \E\!\left[\Big|\frac{1}{b}\sum_{j=1}^b X_j\Big|\right]
\le \sqrt{\E\!\left[\Big(\frac{1}{b}\sum_{j=1}^b X_j\Big)^2\right]}
\le \frac{\sigma_0}{\sqrt{b}}.
\label{eq:avg-smoothness-variance}
\end{align}
Substituting~\eqref{eq:avg-smoothness-variance} into
\eqref{eq:avg-smoothness-initial} yields
\begin{align}
\E[f(\vx_{t+1})]
&\le
\E[f(\vx_t)]
- \tfrac{\mu_{\cD}}{2}\,\eta\,\E\|\nabla f(\vx_t)\|
+ \tfrac{L_0}{2}\,\eta^2
+ \frac{2\sigma_0}{\sqrt{b}}.
\label{eq:avg-smoothness-recursion}
\end{align}
Averaging this inequality over $t=0,\dots,T-1$ and denoting
$F_0 = f(\vx_0) - f^\star$, with $f^\star = \inf_\vx f(\vx) > -\infty$, we obtain
\begin{equation}
\label{eq:avg-smoothness-gradavg}
\frac{1}{2T}\sum_{t=0}^{T-1} \E\|\nabla f(\vx_t)\|
\;\le\;
\frac{1}{\mu_\cD}
\left(
    \frac{F_0}{\eta T}
    + \frac{L_0}{2}\eta
    + \frac{2\sigma_0}{\eta\sqrt{b}}
\right).
\end{equation}

\paragraph{Choice of stepsize.}
We choose the stepsize $\eta$ minimizing the right-hand side of
\eqref{eq:avg-smoothness-gradavg} subject to
$\eta \le \tfrac{\mu_\cD}{L_1}$, namely
\begin{equation}
\label{eq:avg-smoothness-eta}
\eta
= \min\!\left(
    \tfrac{\mu_\cD}{L_1},\,
    \sqrt{\tfrac{F_0}{L_0 T}},\,
    \sqrt{\tfrac{\sigma_0}{L_0\sqrt{b}}}
\right).
\end{equation}
Substituting this choice into~\eqref{eq:avg-smoothness-gradavg} gives
\begin{equation}
\label{eq:avg-smoothness-final}
\frac{1}{2T}\sum_{t=0}^{T-1} \E\|\nabla f(\vx_t)\|
\;=\;
\frac{1}{\mu_\cD}\,
\cO\!\left(
    \sqrt{\tfrac{L_0F_0}{T}}
    + \tfrac{L_1 F_0}{\mu_\cD T}
    + \tfrac{\sqrt{\sigma_0}}{b^{1/4}}
\right).
\end{equation}

\paragraph{Complexity.}
To ensure
$\tfrac{1}{2T}\sum_{t=0}^{T-1}\E\|\nabla f(\vx_t)\| \le \varepsilon$,
we require
\[
    T
    = \cO\!\left(
        \tfrac{L_0 F_0}{\mu_\cD^2 \varepsilon^2}
        + \tfrac{L_1 F_0}{\mu_\cD^2 \varepsilon}
    \right),
    \qquad
    b
    = \cO\!\left(
        \tfrac{\sigma_0^2}{\mu_\cD^4 \varepsilon^4}
    \right).
\]
Since $\mu_\cD = \cO(d^{-1/2})$ for isotropic distributions, this simplifies to
\[
    T
    = \cO\!\left(
        \tfrac{d L_0 F_0}{\varepsilon^2}
        + \tfrac{d L_1 F_0}{\varepsilon}
    \right),
    \qquad
    b
    = \cO\!\left(
        \tfrac{d^2 \sigma_0^2}{\varepsilon^4}
    \right).
\]
The total number of function evaluations (two per minibatch per iteration)
is therefore of order
\begin{equation}
\label{eq:avg-smoothness-complexity}
T \times b
\;=\;
\cO\!\left(
    d^3\sigma_0^2 F_0
    \Big(
        \tfrac{L_0}{\varepsilon^6}
        + \tfrac{L_1}{\varepsilon^5}
    \Big)
\right).
\end{equation}

\subsection{Analysis under individual Smoothness}\label{AppSec:IndSm}
For simplicity, in section~4.2, we assumed that each \(f_{\xi}\) is \((L_0,L_1)\)-smooth with respect to \(\nabla f\).
Here we show that the same improved rate still holds when each \(f_{\xi}\) is instead \((L_0,L_1)\)-smooth with respect to its own gradient \(\nabla f_{\xi}\).

Assume that we work under the following assumptions:

\begin{assumption}\label{ass:smooth_ind}
For $\mathcal{P}$-a.e.\ $\xi$, the mapping $x \mapsto f_{\xi}(\vx)$ is 
$(L_0, L_1)$-smooth.
\end{assumption}

\begin{assumption}[Bounded gradient variance]\label{ass:var_bound}
There exists $\sigma \ge 0$ such that for all $\vx \in \mathbb{R}^d$,
\[
  \mathbb{E}_\xi\!\big[\|\nabla f_{\xi}(\vx)-\nabla f(\vx)\|^2\big]\ \le\ \sigma^2 .
\]
\end{assumption}

\begin{propbox}
\begin{lemma}[Equivalent second-moment form]\label{lem:second_moment_equiv}
Assumption~\ref{ass:var_bound} holds if and only if, for all $\vx \in \mathbb{R}^d$,
\[
  \mathbb{E}_\xi\!\big[\|\nabla f_{\xi}(\vx)\|^2\big]\ \le\ \|\nabla f(\vx)\|^2 + \sigma^2 .
\]
\end{lemma}
\end{propbox}
\begin{proof}
Let $G_\xi(\vx):=\nabla f_\xi(\vx)$ and $G(\vx):=\nabla f(\vx)=\mathbb{E}_\xi[G_\xi(\vx)]$.
The variance decomposition identity gives
\[
  \mathbb{E}_\xi\!\big[\|G_\xi(\vx)\|^2\big]
  \;=\; \|G(\vx)\|^2 + \mathbb{E}_\xi\!\big[\|G_\xi(\vx)-G(\vx)\|^2\big].
\]
Therefore
\[
  \mathbb{E}_\xi\!\big[\|G_\xi(\vx)-G(\vx)\|^2\big]\le \sigma^2
  \quad\Longleftrightarrow\quad
  \mathbb{E}_\xi\!\big[\|G_\xi(\vx)\|^2\big]\le \|G(\vx)\|^2+\sigma^2.
\]
\end{proof}

\begin{propbox}
\begin{lemma}\label{Lemma:sampling}
By sampling independent samples $\xi_1,\ldots,\xi_b$ from $\mathcal{P},$ we have for all $x,y \in \mathbb{R}^d:$    $$\mathbb{E}\lVert \frac{1}{b}\sum_{j=1}^b \nabla f_{\xi_j}(y) -\frac{1}{b}\sum_{j=1}^b \nabla f_{\xi_j}(x)\rVert\le  \big( L_0+L_1\sigma +L_1 \| \nabla f(x)\| \big)\,\|x-y \|,$$
and we have also:
 $$\lVert \nabla f(y) - \nabla f(x)\rVert\le \big( L_0+L_1\sigma +L_1 \| \nabla f(x)\|\big)\, \|x-y \|.$$
\end{lemma}
\end{propbox}
\begin{proof}
Let $x,y \in \mathbb{R}^d$. Using Assumption~\ref{ass:smooth_ind}, for $\mathbb{P}$-almost every $\xi$, we have:
\begin{equation*}
\|\nabla f_{\xi}(y)-\nabla f_{\xi}(x)\|
\;\le\;
\big(L_0+L_1\|\nabla f_{\xi}(x)\|\big)\,\|y-x\|.
\end{equation*}
By sampling independent samples $\xi_1,\ldots,\xi_b$ from $\mathcal{P},$ we have:
\begin{align*}
\lVert \frac{1}{b}\sum_{j=1}^b \nabla f_{\xi_j}(y) -\frac{1}{b}\sum_{j=1}^b \nabla f_{\xi_j}(x)\rVert&\le \frac{\sum_{j=1}^b  \lVert \nabla f_{\xi_j}(y) - \nabla f_{\xi_j}(x)\rVert}{b} \\
&\le \bigg( L_0+L_1 \frac{ \sum_{j=1}\|\nabla f_{\xi_j}(x)\| }{b} \bigg) \|x-y \|.
\end{align*}
This implies that:
\begin{align*}
\mathbb{E}\lVert \frac{1}{b}\sum_{j=1}^b \nabla f_{\xi_j}(y) -\frac{1}{b}\sum_{j=1}^b \nabla f_{\xi_j}(x)\rVert &\le \big(L_0+L_1\mathbb{E}\|\nabla f_{\xi_1}(x)\|\big) \|x-y \|\\
&\le L_0+L_1\sqrt{\mathbb{E} \big(\|\nabla f_{\xi_1}(x)\|^2}\big)  .\, \|x-y \|\\
&\le \big(L_0+L_1\sqrt{\sigma^2+ \|\nabla f(x)\|^2 } \big) \|x-y \|\\
&\le \big( L_0+L_1\sigma +L_1 \| \nabla f(x)\|\big)\|x-y \|.
\end{align*}
For the second claim,
using Jensen’s inequality and assumption \ref{ass:var_bound}, we obtain:
\begin{align*}
\|\nabla f(y)-\nabla f(x)\|
&= \big\|\mathbb{E}_{\xi}[\nabla f_{\xi}(y)-\nabla f_{\xi}(x)]\big\|  \\
&\le \mathbb{E}_{\xi}\big[\|\nabla f_{\xi}(y)-\nabla f_{\xi}(x)\|\big] \\
&\le \mathbb{E}_{\xi}\!\Big[L_0+L_1\|\nabla f_{\xi}(x)\|\  \,\Big] \|x-y \|  \\
&\le \Big[L_0+L_1 \mathbb{E}\|\nabla f_{\xi}(x)\|  \Big] \|x-y \| \\
&\le\!\left(L_0 + L_1\sqrt{\sigma^2+\|\nabla f(x)\|^2}\right) \|x-y \| \\
&\le \!\left(L_0 +L_1\sigma + L_1\|\nabla f(x)\|\right) \|x-y \|.
\end{align*}

\end{proof}

For simplicity, we assume throughout the rest of this section that the search distribution~$\mathcal{D}$ is uniform on the unit sphere.
\begin{propbox}
\begin{lemma}\label{lem:uuT}
Let $u\in\mathbb{S}^{d-1}$ be uniformly distributed on the unit sphere
and let $v\in\mathbb{R}^d$. Then
\[
\mathbb{E}\big[\langle v,u\rangle^2\big] \;=\; \frac{\|v\|^2}{d}.
\]

\end{lemma}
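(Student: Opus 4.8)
The plan is to exploit the rotational symmetry of the uniform distribution on the sphere. First I would rewrite the scalar square as a quadratic form: since $\langle v, u\rangle^2 = v^\top (u u^\top) v$, linearity of expectation gives $\mathbb{E}[\langle v, u\rangle^2] = v^\top \mathbb{E}[u u^\top]\, v$, so the whole problem reduces to identifying the second-moment matrix $\Sigma := \mathbb{E}[u u^\top] \in \mathbb{R}^{d\times d}$.

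The key step is to show that $\Sigma$ is a scalar multiple of the identity. Because the uniform law on $\mathbb{S}^{d-1}$ is invariant under every orthogonal transformation $Q$ (i.e.\ $Qu$ has the same distribution as $u$), we have $Q \Sigma Q^\top = \mathbb{E}[(Qu)(Qu)^\top] = \Sigma$ for all orthogonal $Q$. A symmetric matrix that commutes with every rotation must be isotropic, so $\Sigma = c\, I_d$ for some scalar $c \ge 0$.

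To pin down $c$, I would take the trace: on one hand $\Tr(\Sigma) = \mathbb{E}[\Tr(u u^\top)] = \mathbb{E}[\|u\|^2] = 1$, since $u$ lies on the unit sphere; on the other hand $\Tr(c\, I_d) = c d$. Equating gives $c = 1/d$, hence $\Sigma = \tfrac{1}{d} I_d$. Substituting back yields $\mathbb{E}[\langle v, u\rangle^2] = v^\top (\tfrac{1}{d} I_d)\, v = \|v\|^2/d$, as claimed.

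There is no genuine obstacle here, as the result is elementary; the only point needing a little care is the isotropy argument, i.e.\ justifying that invariance under all orthogonal $Q$ forces $\Sigma$ to be a multiple of the identity. If one prefers to sidestep it, an equivalent coordinate-wise route works: permutation symmetry among the coordinates of $u$ makes $\mathbb{E}[u_i^2]$ identical across $i$, and since $\sum_i u_i^2 = 1$ each equals $1/d$, while the sign-flip symmetry $u_j \mapsto -u_j$ gives $\mathbb{E}[u_i u_j] = 0$ for $i \neq j$; expanding $\langle v, u\rangle^2 = \sum_{i,j} v_i v_j u_i u_j$ and taking expectations then yields the same conclusion.
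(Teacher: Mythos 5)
Your proof is correct and rests on the same idea as the paper's: orthogonal invariance of the uniform distribution on the sphere together with $\|u\|^2=1$. The paper rotates $v$ onto $\|v\|e_1$ and reduces to $\mathbb{E}[u_1^2]=1/d$, while you package the identical symmetry argument as $\mathbb{E}[uu^\top]=\tfrac{1}{d}I_d$ via a trace computation; this is a purely cosmetic difference (and your version makes explicit the step $\mathbb{E}[u_1^2]=1/d$ that the paper leaves implicit).
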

\end{propbox}

\begin{proof}
Choose $Q$ an orthogonal matrix so that
$Qv=\|v\|e_1$, where $e_1=(1,0,\dots,0)^\top$. Then
$$
\mathbb{E}\big[\langle v,u\rangle^2\big]
= \mathbb{E}\big[\langle Qv,Qu\rangle^2\big]
= \|v\|^2\,\mathbb{E}\big[\langle e_1,u\rangle^2\big]
= \|v\|^2\,\mathbb{E}[u_1^2]=\frac{||v||^2}{d}.$$
\end{proof}

By translation invariance (Section~3.5), we may shift $M_t^\pm$ by an optimal constant $c_t$ so that we are left with an estimation error given by:
\[
\big| (M_t^+ - M_t^-) - ( f(\vx_t^{+}) - f(\vx_t^{-})) \big|.
\]

Under the assumptions stated above, the following lemma provides an explicit upper bound on the expected value of this error.
\begin{propbox}
\begin{lemma}\label{lem:diff-exp-iso}
Let $\vx_t^\pm := \vx_t \pm \eta s_t$, and assume Assumptions~\ref{ass:smooth_ind} and~\ref{ass:var_bound} hold. 
Suppose the search direction $s_t$ is drawn uniformly from the unit sphere $\mathbb{S}^{d-1}$. 
Let $\xi_{t,1},\ldots,\xi_{t,b}$ be i.i.d.\ samples from $\mathcal{P}$, and define the mini-batch estimators
\[
M_t^\pm \;:=\; \frac{1}{b}\sum_{j=1}^b f_{\xi_{t,j}}(\vx_t^\pm).
\]
Then
$$\mathbb{E}\big[\big|(M^+-M^-) - (f(x_t^+)-f(x_t^-))\big|  \, \big| x_t\big]\le\frac{2\eta \sigma}{\sqrt{db}}+ 2\eta^2\big(L_0+L_1\sigma + L_1\| \nabla f(x_t) \| \big).$$
\end{lemma}
\end{propbox}

\begin{proof}
Denote: $x_t^\pm=x_t\pm\eta s_t$.  By the fundamental theorem of calculus, we have:
$$\begin{cases}
f_\xi(x_t^+)-f_\xi(x_t)
\;=\;\int_0^{1}\!\!\!\big\langle \nabla f_\xi(x_t+u \eta s_t),\,\eta s_t\big\rangle\,du,
\\
f_\xi(x_t^-)-f_\xi(x_t)
\;=\;\int_0^{-1}\!\!\!\big\langle \nabla f_\xi(x_t+u \eta s_t),\,\eta s_t\big\rangle\,du.
\end{cases} \Longrightarrow f_\xi(x_t^+)-f_\xi(x_t^-)= \int_{-1}^{1}\!\!\!\big\langle \nabla f_\xi(x_t+u \eta s_t),\,\eta s_t\big\rangle\,du.
$$
Averaging over the minibatch and subtracting the population identity gives:
\begin{align*}
(M^+-M^-) - (f(x_t^+)-f(x_t^-))
&= \eta\int_{-1}^{1}\!\Big\langle \underbrace{\tfrac1b\sum_{j=1}^b\nabla f_{\xi_j}(x_t+u \eta s_t)-\nabla f(x_t+u \eta s_t)}_{=:~\Delta(x_t,s_t)},\,s_t\Big\rangle\,du. \tag{$\star$}
\end{align*}
\noindent
Denote:
$\overline g(x):=\frac1b\sum_{j=1}^b\nabla f_{\xi_j}(x)$. We have:
\[
\Delta(x_t,s_t)
=\big(\overline g(x_t)-\nabla f(x_t)\big)
+\big(\overline g(x_t+u \eta s_t)-\overline g(x_t)\big)
-\big(\nabla f(x_t+u \eta s_t)-\nabla f(x_t)\big).
\]
Plugging this into $(\star)$ and using the triangle inequality yields
\begin{align*}
\big|(M^+-M^-) - (f(x_t^+)-f(x_t^-))\big|
&\le 2\eta\,\big|\langle \overline g(x_t)-\nabla f(x_t),\,s_t\rangle\big| \\
&\qquad + \eta\!\int_{-1}^{1}\!\Big(\big|\langle \overline g(x_t+u \eta s_t)-\overline g(x_t),s_t\rangle\big|
\\
&\qquad +\big|\langle \nabla f(x_t+u \eta s_t)-\nabla f(x_t),s_t\rangle\big|\Big)\,du. \tag{$\dagger$}
\end{align*}

By denoting $y_{u,t}=x_t+u \eta s_t.$  By lemma \ref{Lemma:sampling}, we have:
\begin{equation}\label{eq:grad-diff-bounds}
\begin{cases}
\displaystyle
\big|\langle \nabla f(y_{u,t}) - \nabla f(x_t), s_t \rangle\big|
~\le~
\|\nabla f(y_{u,t}) - \nabla f(x_t)\|\,\|s_t\|
~\le~
\big(L_0 + L_1\sigma + L_1\|\nabla f(x_t)\|\big)\,|u|\eta\,\|s_t\|^2,
&\hspace{-1em}\\[1.2em]
\displaystyle
\mathbb{E}\!\left[\big|\langle \overline g(y_{u,t}) - \overline g(x_t), s_t \rangle\big| \,\middle|\, x_t, s_t\right]
~\le~
\mathbb{E}\!\left[\|\overline g(y_{u,t}) - \overline g(x_t)\| \,\middle|\, x_t, s_t\right]\!\|s_t\|
~\le~
\big(L_0 + L_1\sigma + L_1\|\nabla f(x_t)\|\big)\,|u|\eta\,\|s_t\|^2,
&\hspace{-1em}
\end{cases}
\end{equation}
Using  Cauchy–Schwarz inequality and lemma \ref{lem:uuT}  we have:

\begin{align*}
    \mathbb{E}[ \big|\langle \overline g(x_t)-\nabla f(x_t),\,s_t\rangle\big|   \, |x_t,\xi_1,\ldots,\xi_b  ] &\le \sqrt{ \mathbb{E}[ \langle \overline g(x_t)-\nabla f(x_t),\,s_t\rangle^2   \, |x_t,\xi_1,\ldots,\xi_b   ]  }\\
    &=\frac{\|\overline g(x_t)-\nabla f(x_t)\| }{\sqrt{d}} 
\end{align*}
We  have also:  $\mathbb{E}\big[\|\overline g(x_t)-\nabla f(x_t)\| \, | x_t \big]\le \sqrt{\mathbb{E}\big[ \|\overline g(x_t)-\nabla f(x_t)\|^2 \, | x_t  \big] }=\sqrt{\frac{1}{b} \mathbb{E}\big[ \|\nabla f_{\xi_1}(x_t)-\nabla f(x_t)\|^2 \, | x_t  \big]  }.$ This implies that: $\mathbb{E}\big[\|\overline g(x_t)-\nabla f(x_t)\| \, | x_t \big]\le \frac{\sigma}{\sqrt{b}}.$ It follows that: $\mathbb{E}\big[\frac{\|\overline g(x_t)-\nabla f(x_t)\| }{\sqrt{d}}  \, | x_t\big]\le \frac{\sigma}{\sqrt{db}}.$ Therefore:
$$ \mathbb{E}\big[ \big|\langle \overline g(x_t)-\nabla f(x_t),\,s_t\rangle\big|\, | x_t   \big]\le  \frac{\sigma}{\sqrt{db}}.$$

Combining the bounds in \ref{eq:grad-diff-bounds} with ($\dagger$) and the inequality above, we obtain:
$$\mathbb{E}\big[\big|(M^+-M^-) - (f(x_t^+)-f(x_t^-))\big|  \, \big| x_t\big]\le\frac{2\eta \sigma}{\sqrt{db}}+ 2\eta^2\big(L_0+L_1\sigma + L_1\| \nabla f(x_t) \| \big).$$
\end{proof}

\begin{propbox}

\begin{theorem}\label{lem:indiv-descent}
Assume that assumptions \ref{ass:smooth_ind}, and \ref{ass:var_bound} hold, the search directions sampled from the uniform distribution over the unit sphere, and the step size $
\eta \le \; \frac{\mu_{\mathcal{D}}}{16L_1}
$. We have for all $t \ge 0$:
\[
\frac{1}{T}\sum_{t=0}^{T-1}\mathbb{E}\big\|\nabla f(\vx_t)\big\|
\;\le\;
\frac{4F_0}{\mu_{\mathcal{D}}\eta\,T}
\;+\;\frac{16}{\mu_{\mathcal{D}}}\,\frac{\sigma}{\sqrt{db}}
\;+\;\frac{18\eta}{\mu_{\mathcal{D}}}\,(L_0+L_1\sigma).
\]
where $F_0:=f(\vx_0)-f^\star.$
\end{theorem}
\end{propbox}

\begin{proof}[Proof of Theorem \ref{lem:indiv-descent}]
Let $t\ge 0$. Using lemma \ref{Lemma:sampling},
   $f$ is $(L_0+L_1\sigma,L_1)$ smooth, and by applying \cref{thm:general-descent} , since $\eta\le\mu_\cD/L_1$, we have:
$$
\E[f(\vx_{t+1})]
\le
\E[f(\vx_t)]
- \tfrac{\mu_{\cD}}{2}\,\eta\,\E\|\nabla f(\vx_t)\|
+ \tfrac{L_0+L_1\sigma}{2}\,\eta^2
+ 2\,\E\!\left[
    \big|
        M_t^+ - M_t^- - (f(\vx_t^+) - f(\vx_t^-))
    \big|
\right].    
      $$

   Using \cref{lem:diff-exp-iso}, we deduce that:
\begin{align*}
\mathbb{E}\big[f(\vx_{t+1})\big]
&\le
\mathbb{E}\big[f(\vx_t)\big]
- \tfrac{\mu_{\mathcal{D}}}{2}\,\eta\,\mathbb{E}\big\|\nabla f(\vx_t)\big\|
+ \tfrac{L_0+L_1\sigma}{2}\,\eta^2
+ \frac{4\eta}{\sqrt{db}}\,\sigma
+ 4\eta^2\Big(L_0+L_1\sigma+L_1\,\mathbb{E}\big\|\nabla f(\vx_t)\big\|\Big)\\
&=
\mathbb{E}\big[f(\vx_t)\big]
-\Big(\tfrac{\mu_{\mathcal{D}}}{2}\,\eta-4\eta^2 L_1\Big)\mathbb{E}\big\|\nabla f(\vx_t)\big\|
+ \underbrace{\Big(\tfrac{L_0+L_1 \sigma}{2}\,\eta^2+\tfrac{4\eta}{\sqrt{db}}\,\sigma+4\eta^2(L_0+L_1\sigma)\Big)}_{=:~\beta(\eta,b)}.
\end{align*}
Let
$
\alpha(\eta,b)\;:=\;\tfrac{\mu_{\mathcal{D}}}{2}\,\eta-4\eta^2 L_1.$
If we choose:
$
\eta \;\le\;\frac{\mu_{\mathcal{D}}}{16L_1}
$,
then $\alpha(\eta,b)\ge \tfrac{\mu_{\mathcal{D}}}{4}\,\eta$. Summing the one–step bound over $t=0,\ldots,T-1$ and telescoping gives:
\[
\sum_{t=0}^{T-1}\alpha(\eta,b)\,\mathbb{E}\big\|\nabla f(\vx_t)\big\|
\;\le\; \mathbb{E}\big[f(\vx_0)\big]-\mathbb{E}\big[f(\vx_T)\big] + T\,\beta(\eta,b)
\;\le\; F_0 + T\,\beta(\eta,b),
\]
where $F_0:=f(\vx_0)-f^\star$. Dividing by $T$ and by $\alpha(\eta,b)\ge \frac{\mu_{\mathcal{D}}}{4}\eta$ yields:
\[
\frac{1}{T}\sum_{t=0}^{T-1}\mathbb{E}\big\|\nabla f(\vx_t)\big\|
\;\le\;
\frac{4F_0}{\mu_{\mathcal{D}}\eta\,T}
\;+\;\frac{4}{\mu_{\mathcal{D}}\eta}\,\beta(\eta,b).
\]
Expanding $\beta(\eta,b)$ and simplifying, we obtain:
\[
\frac{1}{T}\sum_{t=0}^{T-1}\mathbb{E}\big\|\nabla f(\vx_t)\big\|
\;\le\;
\frac{4F_0}{\mu_{\mathcal{D}}\eta\,T}
\;+\;\frac{2L_0+2L_1 \sigma}{\mu_{\mathcal{D}}}\,\eta
\;+\;\frac{16}{\mu_{\mathcal{D}}}\,\frac{\sigma}{\sqrt{db}}
\;+\;\frac{16\eta}{\mu_{\mathcal{D}}}\,(L_0+L_1\sigma).
\]
\end{proof}
\paragraph{Choice of stepsize.}
We choose the stepsize $\eta$ minimizing the right-hand side of
Theorem~\ref{lem:indiv-descent} subject to
$\eta \le \tfrac{\mu_{\mathcal{D}}}{16 L_1}$, namely
\begin{equation}
\label{eq:indiv-eta}
\eta
\;=\;
\min\!\left(
    \tfrac{\mu_{\mathcal{D}}}{16 L_1},\,
    \sqrt{\tfrac{F_0}{(L_0+L_1\sigma)\,T}}
\right).
\end{equation}
Substituting this choice into Theorem~\ref{lem:indiv-descent} and simplifying yields
\begin{equation}
\label{eq:indiv-final}
\frac{1}{T}\sum_{t=0}^{T-1} \mathbb{E}\|\nabla f(\vx_t)\|
\;=\;
\frac{1}{\mu_{\mathcal{D}}}\,
\mathcal{O}\!\left(
    \frac{L_1 F_0}{\mu_{\mathcal{D}} T}
    + \sqrt{\tfrac{(L_0+L_1\sigma)\,F_0}{T}}
    + \frac{\sigma}{\sqrt{d\, b}}
\right).
\end{equation}

\paragraph{Complexity.}
For the uniform distribution over the unit sphere, we have
$\mu_{\mathcal{D}}\asymp d^{-1/2}$. With
$T=\Theta\!\big(dL_1/\varepsilon + d(L_0+L_1\sigma)F_0/\varepsilon^2\big)$ and 
$b=\Theta(\sigma^2/\varepsilon^2)$, we ensure
$\frac{1}{T}\sum_{t=0}^{T-1} \mathbb{E}\|\nabla f(\vx_t)\|\le \varepsilon$.
The total number of function evaluations is then
\[
\textsc{Calls}=2bT
\;=\;\mathcal{O}\!\left(\frac{d\,(L_0+L_1\sigma)\,F_0\,\sigma^2}{\varepsilon^4}\right).
\]

\color{black}

\section{Finite sum case and Variance reduction}

We now consider the finite-sum setting under stronger regularity assumptions.  
Specifically, assume that $f = \tfrac{1}{n}\sum_{i=1}^n f_i$ is $(L_0,L_1)$-smooth and that each component
function $f_i$ is $G$-Lipschitz, i.e.
\begin{equation}
\label{eq:G-Lipschitz}
    |f_i(\vx) - f_i(\vy)| \;\le\; G\,\|\vx-\vy\|,
    \qquad \forall\, \vx,\vy\in\R^d.
\end{equation}

We define
\[
   M_t^\pm =
   \begin{cases}
      f(\vx_t^\pm), & \text{if } t \equiv 0 \pmod{m}, \\[6pt]
      \tfrac{1}{b}\sum_{j=1}^b f_{\xi_j}(\vx_t^\pm),
      & \text{otherwise},
   \end{cases}
\]
\begin{propbox}
\begin{theorem}[Convergence under strong assumptions]
\label{thm:strong-assumptions}
Under Assumption~\eqref{eq:G-Lipschitz}, suppose that
$f$ is $(L_0,L_1)$-smooth, $\vs_t\sim\cD$ satisfies
Assumption~\ref{ass:distribution}, and a variance-reduction scheme with
snapshot period $m$ is used.  
Let $\eta_t\equiv\eta\le \mu_\cD/L_1$ and
$F_0:=f(\vx_0)-f^\star<\infty$. Then for each iteration,
\begin{equation}
\label{eq:strong-assumptions-one-step}
\E[f(\vx_{t+1})]
\;\le\;
\E[f(\vx_t)]
\;-\;\frac{\mu_{\cD}}{2}\,\eta\,\E\|\nabla f(\vx_t)\|
\;+\;\frac{L_0}{2}\,\eta^2
\;+\;\frac{4\,G\,m}{\sqrt{b}}.
\end{equation}
Averaging over $t=0,\dots,T-1$, we obtain
\begin{equation}
\label{eq:strong-assumptions-avg}
\frac{1}{T}\sum_{t=0}^{T-1}\E\|\nabla f(\vx_t)\|
\;\le\;
\frac{1}{\mu_{\cD}}
\left(
  \frac{F_0}{\eta T}
  + \frac{L_0}{2}\,\eta
  + \frac{4\,G\,m}{\sqrt{b}}
\right).
\end{equation}
\end{theorem}
\end{propbox}
Optimizing with respect to $\eta$ yields
\begin{equation}
\label{eq:strong-assumptions-optimized}
\frac{1}{T}\sum_{t=0}^{T-1}\E\|\nabla f(\vx_t)\|
\;=\;
\frac{1}{\mu_{\cD}}\,
\cO\!\left(
  \frac{L_1 F_0}{\mu_{\cD} T}
  + \sqrt{\frac{L_0 F_0}{T}}
  + \frac{G\,m}{\sqrt{b}}
\right).
\end{equation}
To guarantee
$\tfrac{1}{T}\sum_{t=0}^{T-1}\E\|\nabla f(\vx_t)\|\le\varepsilon$, it suffices to choose
\[
T = d F_0\,\cO\!\left(\frac{L_1}{\varepsilon} + \frac{L_0}{\varepsilon^2}\right),
\qquad
b=b(\varepsilon,m)=\cO\!\left(\frac{d G^2 m^2}{\varepsilon^2}\right).
\]
The total number of function evaluations is then
\begin{align}
\textsc{Calls}(m)
&= \frac{T}{m}\big(n + (m-1)b(m)\big) \notag\\
&= \cO\!\left(
    d F_0
    \Big(\frac{L_1}{\varepsilon} + \frac{L_0}{\varepsilon^2}\Big)
    \Big(\frac{n}{m} + \frac{d G^2 m^2}{\varepsilon^2}\Big)
\right).
\label{eq:strong-assumptions-calls}
\end{align}
Minimizing over $m$ with $b(\varepsilon,m)\le n$ gives
\[
    m^\star
    =
    \min\!\left\{
      \Big(\tfrac{n\varepsilon^2}{d G^2}\Big)^{1/3},
      \Big(\tfrac{n\varepsilon^2}{d G^2}\Big)^{1/2}
    \right\},
\]
and substituting $m^\star$ into~\eqref{eq:strong-assumptions-calls} yields the
optimal complexity
\begin{equation}
\label{eq:strong-assumptions-final}
\textsc{Calls}^\star
= \cO\!\left(
  \min\!\left\{
    \frac{d^{4/3} n^{2/3} G^{2/3}}{\varepsilon^{2/3}},
    d n
  \right\}
  F_0
  \Big(\frac{L_1}{\varepsilon} + \frac{L_0}{\varepsilon^2}\Big)
\right).
\end{equation}

\eqref{eq:strong-assumptions-final} is better than the deterministic complexity whenever $n\geq \tfrac{d G^2}{\varepsilon^2}.$ The quantity $d\,G^2$ can be interpreted as a noise level.

\begin{proof}
Define
\[
X_j := f_{\xi^t_j}(\vx_t^+) - f_{\xi^t_j}(\vx_t^-) - (f(\vx_t^+) - f(\vx_t^-)).
\]
By construction $\E[X_j]=0$.  
Let $\tilde{\vx}_t$ denote the snapshot point at the start of the current
variance-reduction block of length $m$. Using the $G$-Lipschitz property of each
$f_i$, we can write
\begin{align*}
\E[X_j^2]
&= \E\Big[
  \big|
    f_{\xi^t_j}(\vx_t^+) - f(\vx_t^+)
    - \big(f_{\xi^t_j}(\vx_t^-) - f(\vx_t^-)\big)
  \big|^2
\Big] \\
&= \E\Big[
  \big|
    (f_{\xi^t_j}(\vx_t^+) - f_{\xi^t_j}(\tilde{\vx}_t))
    - (f_{\xi^t_j}(\vx_t^-) - f_{\xi^t_j}(\tilde{\vx}_t))
    - \big(f(\vx_t^+) - f(\tilde{\vx}_t)\big)
    + \big(f(\vx_t^-) - f(\tilde{\vx}_t)\big)
  \big|^2
\Big] \\
&\le 2(2G)^2\big(
  \E\|\vx_t^+ - \tilde{\vx}_t\|^2
  + \E\|\vx_t^- - \tilde{\vx}_t\|^2
\big)
\;\le\; (4 G m \eta)^2,
\end{align*}
where the last inequality follows since the iterates remain within a distance
$\cO(m\eta)$ of the last snapshot over the $m$ local steps.  
Translation symmetry implies that this snapshot correction can be inserted
without explicitly modifying the algorithm.

By independence of the samples $\xi^t_j$ and
applying Lemma~\ref{lem:var-average} and Jensen's inequality, we have
\begin{align}
\E\!\left[
    \big|
        M_t^+ - M_t^- - (f(\vx_t^+) - f(\vx_t^-))
    \big|
\right]
&= \E\!\left[\Big|\frac{1}{b}\sum_{j=1}^b X_j\Big|\right]
\le \sqrt{\E\!\left[\Big(\frac{1}{b}\sum_{j=1}^b X_j\Big)^2\right]}
\le \frac{4 \eta G m}{\sqrt{b}}.
\label{eq:strong-assumptions-variance}
\end{align}
Substituting~\eqref{eq:strong-assumptions-variance} into
Theorem~\ref{thm:general-descent} directly yields the one-step bound
\eqref{eq:strong-assumptions-one-step}.  
Averaging over $t$ and optimizing $\eta$ gives
\eqref{eq:strong-assumptions-avg}--\eqref{eq:strong-assumptions-optimized}.
Finally, plugging these quantities into the total function-call count and
minimizing over $m$ establishes the complexity bound
\eqref{eq:strong-assumptions-final}.
\end{proof}

\section{Learning with Helper or Human Feedback: Convergence Proof}

We consider the setting where exact function evaluations are unavailable, and the
algorithm receives \emph{comparison feedback} from a helper (e.g., a human or a
proxy). The helper returns a scalar signal $h(\vx)$ such that pairwise
differences approximate true function differences.

\begin{assumption}[$\delta$-inexact comparison feedback]
\label{ass:delta-helper}
There exists $\delta\ge 0$ such that for all $\vx,\vy\in\R^d$,
\[
  \E\Big[\;\big|\,h(\vx)-h(\vy) \;-\; (f(\vx)-f(\vy))\,\big|\;\Big] \;\le\; \delta.
\]
\end{assumption}

At iteration $t$, define the perturbed points $\vx_t^\pm=\vx_t\pm \eta_t \vs_t$
and set the scalars
\[
  M_t^\pm \;:=\; h(\vx_t^\pm),
\]
so that the update rule of Algorithm~\ref{alg:srs} becomes
\(
  \vx_{t+1} = \vx_t - \eta_t\,\mathrm{sign}(M_t^+ - M_t^-)\,\vs_t.
\)
\begin{propbox}
\begin{theorem}[Convergence with $\delta$-inexact feedback]
\label{thm:helper-delta}
Suppose $f$ is $(L_0,L_1)$-smooth in the sense of \eqref{eq:L0L1-smooth},
the random directions satisfy Assumption~\ref{ass:distribution}
($\E\|\vs_t\|^2=1$ and $\E[|\langle \nabla f(\vx),\vs_t\rangle|]\ge
\mu_{\cD}\|\nabla f(\vx)\|$), and the helper satisfies
Assumption~\ref{ass:delta-helper}.
Let $F_0:=f(\vx_0)-f^\star<\infty$ and choose a constant stepsize
$\eta_t\equiv \eta \le \mu_{\cD}/L_1$. Then
\begin{equation}
\label{eq:helper-one-step}
\E[f(\vx_{t+1})]
\;\le\;
\E[f(\vx_t)]
\;-\;\frac{\mu_{\cD}}{2}\,\eta\,\E\|\nabla f(\vx_t)\|
\;+\;\frac{L_0}{2}\,\eta^2
\;+\; 2\,\delta.
\end{equation}
Averaging over $t=0,\dots,T-1$ yields
\begin{equation}
\label{eq:helper-avg-grad}
\frac{1}{T}\sum_{t=0}^{T-1}\E\|\nabla f(\vx_t)\|
\;\le\;
\frac{1}{\mu_{\cD}}
\left(
  \frac{F_0}{\eta T}
  + \frac{L_0}{2}\,\eta
  + \frac{2\delta}{\eta}
\right).
\end{equation}
\end{theorem}
\end{propbox}
Optimizing the right-hand side over $\eta$ gives
\begin{equation}
\label{eq:helper-optimized}
\frac{1}{T}\sum_{t=0}^{T-1}\E\|\nabla f(\vx_t)\|
\;=\;
\frac{1}{\mu_{\cD}}\,
\cO\!\left(
  \frac{L_1 F_0}{\mu_{\cD} T}
  + \sqrt{\frac{L_0 F_0}{T}}
  + \sqrt{L_0\,\delta}
\right).
\end{equation}
In particular, choosing
\(
T=\cO\!\big(\tfrac{dL_1}{\varepsilon}+\tfrac{dL_0 F_0}{\varepsilon^2}\big)
\)
(with $\mu_{\cD}\asymp d^{-1/2}$) ensures
\begin{equation}
\label{eq:helper-eps-floor}
\frac{1}{T}\sum_{t=0}^{T-1}\E\|\nabla f(\vx_t)\|
\;\le\;
\varepsilon \;+\; \cO(\sqrt{d\delta}).
\end{equation}
That is, convergence is guaranteed up to an $\cO(\sqrt{d\delta})$ accuracy floor
set by the inexactness of the helper.

\begin{proof}
Apply the general descent inequality
(Theorem~\ref{thm:general-descent}) with $M_t^\pm=h(\vx_t^\pm)$:
\begin{align*}
\E[f(\vx_{t+1})]
&\le
\E[f(\vx_t)]
- \tfrac{\mu_{\cD}}{2}\,\eta\,\E\|\nabla f(\vx_t)\|
+ \tfrac{L_0}{2}\,\eta^2
+ 2\,\E\!\Big[
  \big|\, (M_t^+ - M_t^-)-\big(f(\vx_t^+) - f(\vx_t^-)\big)\,\big|
\Big].
\end{align*}
By Assumption~\ref{ass:delta-helper} with $(\vx,\vy)=(\vx_t^+,\vx_t^-)$, the
last expectation is at most $2\delta$, giving
\eqref{eq:helper-one-step}. Summing from $t=0$ to $T-1$, taking total
expectation, and telescoping yields
\[
\mu_{\cD}\,\eta \sum_{t=0}^{T-1}\E\|\nabla f(\vx_t)\|
\;\le\;
F_0 \;+\; \tfrac{L_0}{2}\,\eta^2 T \;+\; 2\delta\,T,
\]
which rearranges to \eqref{eq:helper-avg-grad}. Optimizing the right-hand side
over $\eta$ under the constraint $\eta\le \mu_{\cD}/L_1$ (take
$\eta=\min\{\mu_{\cD}/L_1,\sqrt{F_0/(L_0T)},\sqrt{2\delta/L_0}\}$) gives
\eqref{eq:helper-optimized}. Finally, plugging
$T=\cO(\tfrac{dL_1}{\varepsilon}+\tfrac{dL_0 F_0}{\varepsilon^2})$ (using
$\mu_{\cD}\asymp d^{-1/2}$ for isotropic $\cD$) makes the first two terms
$\le \varepsilon$, and the remaining term is $\cO(\sqrt{\delta})$, yielding
\eqref{eq:helper-eps-floor}.
\end{proof}

\section{Failure of Classical Momenta}
\label{sec:failure-momentum}

We now examine whether classical momentum techniques from first-order
optimization can improve the performance of Random Search.
We consider three natural extensions of popular momentum formulations:
\emph{Heavy-Ball}~\cite{polyak1964some},
\emph{momentum-based variance reduction (MVR)}~\cite{cutkosky2019momentum},
and \emph{implicit gradient transport}~\cite{arnold2019reducing}.
Each method can be adapted to our comparison-based framework by
replacing stochastic gradients with function differences.

\paragraph{Momentum formulations.}
Let $\vs_t\sim\cD$ and define $\vx_t^\pm = \vx_t \pm \eta \vs_t$.
We consider the following recursions:
\begin{align*}
  \text{(Heavy-Ball)} & \quad
  M_t = (1-\beta)M_{t-1} + \beta\big(f_\xi(\vx_t^+) - f_\xi(\vx_t^-)\big), \\[4pt]
  \text{(MVR)} & \quad
  M_t = (1-\beta)\!\left(M_{t-1}
     + f_\xi(\vx_t^+) - f_\xi(\vx_t^-)
     + f_\xi(\vx_{t-1}^+) - f_\xi(\vx_{t-1}^-)\right)
     + \beta\big(f_\xi(\vx_t^+) - f_\xi(\vx_t^-)\big), \\[4pt]
  \text{(Transport)} & \quad
  M_t = (1-\beta)M_{t-1}
     + \beta\big(f_\xi(\tilde{\vx}_t^+) - f_\xi(\tilde{\vx}_t^-)\big),
     \quad
     \tilde{\vx}_t^\pm = \vx_t^\pm + \tfrac{1-\beta}{\beta}(\vx_t^\pm - \vx_{t-1}^\pm).
\end{align*}
Each variant attempts to smooth the stochastic signal
$f_\xi(\vx_t^+)-f_\xi(\vx_t^-)$ across iterations.

\paragraph{Error decomposition.}
For clarity, we focus on the Heavy-Ball recursion; the same reasoning applies to
the others. Define the momentum error
\[
e_t := M_t - \big(f(\vx_t^+) - f(\vx_t^-)\big),
\]
and decompose the updates as
\[
e_t = (1-\beta)\big(e_{t-1} + b_t\big) + \beta v_t,
\]
where
\[
b_t := f(\vx_t^+) - f(\vx_t^-) - \big(f(\vx_{t-1}^+) - f(\vx_{t-1}^-)\big),
\qquad
v_t := \big(f_\xi(\vx_t^+) - f_\xi(\vx_t^-)\big)
      - \big(f(\vx_t^+) - f(\vx_t^-)\big).
\]

\paragraph{Bias--variance tradeoff.}
Assuming $f$ is $G$-Lipschitz, we have $|b_t|\le 2G\eta$ and
$\E[v_t^2]\le (2G\eta)^2$.
Taking expectations and unrolling the recursion gives
\begin{align*}
    \E|e_t|
    &\le (1-\beta)^t\E|e_0|
      + (1-\beta)\!\sum_i (1-\beta)^{t-i}\E|b_i|
      + \beta\,\sqrt{\E\!\left[\left(\sum_i (1-\beta)^{t-i}v_i\right)^2\right]} \\[4pt]
    &\le (1-\beta)^t\E|e_0|
      + (1-\beta)\tfrac{2G\eta}{\beta}
      + 2G\eta\sqrt{\beta}.
\end{align*}
Both the bias and variance terms scale linearly in $\eta$, and no choice of
$\beta$ can improve this dependence. The optimal value is $\beta=1$, which
eliminates momentum altogether. The root cause is that the bias and variance
of the difference estimator are of the same order in $\eta$, preventing the
variance reduction mechanism from dominating as in first-order methods.

\paragraph{Other variants.}
When each $f_\xi$ is $G$-Lipschitz, the same argument applies to the MVR
recursion: both the correction and the variance term scale as
$\cO(G\eta)$, leaving no improvement over the non-momentum baseline.

The implicit transport momentum fails for a different reason.
It requires extrapolated states
\(
\tilde{\vx}_t^\pm = \vx_t^\pm + \tfrac{1-\beta}{\beta}(\vx_t^\pm - \vx_{t-1}^\pm),
\)
which introduces an additional displacement of order
$\tfrac{1-\beta}{\beta}\eta$. Since the variance of the difference
$f_\xi(\tilde{\vx}_t^+) - f_\xi(\tilde{\vx}_t^-)$ depends on the proximity
of these states, this extrapolation \emph{increases} variance by a factor of
$\tfrac{1}{\beta}$. Consequently, the overall noise term scales as
$\cO(\tfrac{\sigma}{\sqrt{\beta}})$, offsetting the potential
$\sqrt{\beta}$ gain from averaging.  
Once again, the best choice is $\beta=1$, implying that not using momentum
is preferable.

\paragraph{Conclusion.}
In summary, all classical momenta---Heavy-Ball, momentum-based variance
reduction, and implicit transport---fail to improve Random Search.  
The key difficulty lies in the structure of function-difference oracles:
both the bias and the variance scale with the step size $\eta$, unlike in
gradient-based methods where variance dominates.  
Consequently, naive momentum adaptation offers no benefit and can even increase
noise.

\end{document}